\documentclass[12pt,twoside,leqno]{article}
\usepackage[T1]{fontenc}
\usepackage{amsfonts}
\usepackage{amsmath,amsthm}
\usepackage{amssymb,latexsym}
\usepackage{mathrsfs} 
\usepackage{graphics,graphicx}
\theoremstyle{plain}
\newtheorem{theorem}{Theorem}[section]
\newtheorem{lemma}[theorem]{Lemma}
\newtheorem{proposition}[theorem]{Proposition}
\newtheorem{corollary}[theorem]{Corollary}

\theoremstyle{definition}
\newtheorem{example}[theorem]{Example}
\newtheorem{definition}[theorem]{Definition}
\theoremstyle{remark}
\newtheorem{remark}[theorem]{Remark}
\newtheorem{question}[theorem]{Question}

\DeclareMathOperator{\Iso}{{\rm Iso}}

\DeclareMathOperator{\inter}{{\rm int}}
\DeclareMathOperator{\cl}{{\rm cl}}

\DeclareMathOperator{\lsim}{\lesssim}
\DeclareMathOperator{\super}{{\rm Super}}
\begin{document}
\title{Quasiorders for a characterization of iso-dense spaces}
\author{Thomas Richmond and Eliza Wajch\\
Department of Mathematics\\
Western Kentucky University,\\
Bowling Green, KY 42101, USA\\
email: tom.richmond@wku.edu\\
Institute of Mathematics,\\
University of Siedlce\\
3 Maja 54, 08-110, Siedlce, Poland\\
email: eliza.wajch@gmail.com}
\maketitle
\begin{abstract}

A (generalized) topological space is called an iso-dense space if the set of all its isolated points is dense in the space. The main aim of the article is to show in $\mathbf{ZF}$ a new characterization of iso-dense spaces in terms of special quasiorders. 
For a non-empty family $\mathcal{A}$ of subsets of a set $X$, a quasiorder $\lsim_{\mathcal{A}}$ on $X$ determined by $\mathcal{A}$ is defined. Necessary and sufficient conditions for $\mathcal{A}$ are given to have the property that the topology consisting of all $\lsim_{\mathcal{A}}$-increasing sets coincides with the generalized topology on $X$ consisting of the empty set and all supersets of non-empty members of $\mathcal{A}$. The results obtained, applied to the quasiorder $\lsim_{\mathcal{D}}$ determined by the family $\mathcal{D}$ of all dense sets of a given (generalized) topological space, lead to a new characterization of non-trivial iso-dense spaces. Independence results concerning resolvable spaces are also obtained.
\medskip

\noindent\emph{Mathematics Subject Classification}: 54A05, 54A10, 54F05, 54F30, 54G12, 06A75, 06F30, 54A35, 03E35

\noindent\emph{Keywords:} Quasiorder, Specialization topology, Generalized topology, Alexandroff space, Iso-dense space, Resolvable space, Amorphous set.
\end{abstract}

\maketitle

\section{Introduction}
\label{Intro}

The set-theoretic framework for this paper is the Zermelo-Fraenkel system of axioms $\mathbf{ZF}$. The Axiom of Choice ($\mathbf{AC}$) is not an axiom of $\mathbf{ZF}$. The system $\mathbf{ZF+AC}$ is denoted by $\mathbf{ZFC}$.

 We recall that a \emph{quasiorder} (or, equivalently, a \emph{preorder}) on a set $X$ is a reflexive and transitive binary relation on $X$. For a given a quasiorder $\lsim$ on $X$, the family 
 
\[\tau[\lsim]=\{U\subseteq X: (\forall x\in U)(\forall y\in X)(x\lsim y\rightarrow y\in U) \}\]
is a topology on $X$ called the specialization topology from $\lsim$ (see \cite[p. 195]{Rich}).

Every family of subsets of a given set $X$ determines a quasiorder on $X$ in the sense of the following definition: 

\begin{definition}
\label{s1d1}
Let $\mathcal{A}$ be a family of subsets of a set $X$. The binary relation $\lsim_{\mathcal{A}}$ on $X$ defined by the following rule:
\[ (\forall x,y\in X) (x\lsim_{\mathcal{A}} y\leftrightarrow (\forall A\in \mathcal{A})(x\in A\rightarrow y\in A))\]
is called the \emph{quasiorder determined by} $\mathcal{A}$.
\end{definition}

If $\tau$ is a topology on $X$, the quasiorder $\lsim_{\tau}$ has been considered by various authors. It is usually called the \emph{specialization}(or \emph{canonical}) \emph{preorder} of the topological space $\langle X,\tau\rangle$. Basic properties of specialization preorders can be found in \cite{Rich}.

Every family of subsets of a set $X$ is a base of a generalized topology on $X$ in the sense of the following definitions:

\begin{definition}
\label{s1d2}
\begin{enumerate}
\item[(i)] A \emph{generalized topology} on a set $X$ is a family $\mu$ of subsets of $X$ such that $\emptyset\in \mu$ and, for every subfamily $\mathcal{U}$ of $\mu$, $\bigcup\mathcal{U}\in\mu$ (cf. \cite{Cs}).
\item[(ii)] A \emph{strong generalized topology} on a set $X$ is a generalized topology $\mu$ on $X$ such that $X\in\mu$ (cf. \cite{Cs2, Cs3}).
\item[(iii)] A \emph{base for a generalized topology} $\mu$ on a set $X$ is a subfamily $\mathcal{B}$ of $\mu$ satisfying the following condition:
\[ (\forall U\in\mu)(\forall x\in U)(\exists B\in\mathcal{B}) x\in B\subseteq U\]
\noindent(cf. \cite{Cs1, Cs3}).
\item[(iv)] A \emph{generalized topological space} is an ordered pair $\langle X, \mu\rangle$ where $X$ is a set and $\mu$ is a generalized topology on $X$ (cf.\cite{Cs}). If $\mu$ is a strong generalized topology on $X$, the generalized topological space $\langle X, \mu\rangle$ is said to be \emph{strong}.
\end{enumerate}
\end{definition}

Strong generalized topologies are called \emph{supra topologies} in \cite{MAMK}. A strong generalized topology $\mu$ on $X$ is a topology on $X$ if and only if $\mu$ is closed under finite intersections. Generalized topologies have been widely studied by many mathematicians (see, for instance, \cite{Cs, Cs0, Cs1, Cs2, Cs3, HTW, MAMK, Min, TCh}).

\begin{definition}
\label{s1d3}
Let $\mathcal{A}$ be a family of subsets of a set $X$.
\begin{enumerate}
\item[(i)] The \emph{generalized topology determined by} $\mathcal{A}$ is the family $\mu[\mathcal{A}]$ defined as follows:
\[\mu[\mathcal{A]}:=\{U\subseteq X: (\forall x\in U)(\exists A\in\mathcal{A}) x\in A\subseteq U\}.\]
\item[(ii)] The \emph{extended generalized topology determined by} $\mathcal{A}$ is the family $\tilde{\mu}[\mathcal{A}]$ defined as follows:
\[\tilde{\mu}[\mathcal{A}]:=\mu[\mathcal{A}]\cup\{V\subseteq X: (\exists A\in\mathcal{A}\setminus\{\emptyset\}) A\subseteq V\}.\]
\end{enumerate}
\end{definition}

Let us notice that if $\mathcal{A}$ is a family of subsets of a set $X$, then $\mathcal{A}$ is a base for the generalized topology $\mu[\mathcal{A}]$ on $X$. Furthermore, if $\emptyset\neq\mathcal{A}\neq\{\emptyset\}$, the extended generalized topology $\tilde{\mu}[\mathcal{A}]$ is strong. 

In Section \ref{s3}, given a non-empty family $\mathcal{A}$ of subsets of a set $X$ such that $\mathcal{A}\neq\{\emptyset\}$, we show necessary and sufficient conditions for $\mathcal{A}$ to have the property that $\tau[\lsim_{\mathcal{A}}]=\mu[\mathcal{A}]$ (see Theorem \ref{s3t7}). We also give a number of conditions under which $\tau[\lsim_{\tilde{\mu}[\mathcal{A}]}]=\tilde{\mu}[\mathcal{A}]$ (see Theorem \ref{s3t10}).  Some of our results are relevant to the well-known characterization of Alexandroff spaces via quasiorders. Let us recall the definition of an Alexandroff space, which has its roots in \cite{Alex}.

\begin{definition}
\label{s1d4}
(Cf. \cite[p. 17]{A} and \cite[Definition 8.1.1]{Rich}.) A topological space $\mathbf{X}$ is called an \emph{Alexandroff space} if, for every non-empty family $\mathcal{U}$ of open sets in $\mathbf{X}$, the set $\bigcap\mathcal{U}$ is open in $\mathbf{X}$. The topology of an Alexandroff space is called an \emph{Alexandroff topology}.
\end{definition}

In the light of \cite[Theorem 8.3.3]{Rich}, we have the following characterization of Alexandroff topologies:

\begin{theorem}
\label{s1t5}
(Cf. \cite[Theorem 8.3.3]{Rich}.) For every topology $\tau$ on a set $X$, the following conditions are equivalent:
\begin{enumerate}
\item[(i)] $\tau$ is an Alexandroff topology;
\item[(ii)] there exists a quasiorder $\lsim$ on $\mathbf{X}$ such that $\tau=\tau[\lsim]$;
\item[(iii)] $\tau=\tau[\lsim_{\tau}]$.
\end{enumerate}
\end{theorem}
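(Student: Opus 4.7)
The plan is to prove the cycle $(\mathrm{iii})\Rightarrow(\mathrm{ii})\Rightarrow(\mathrm{i})\Rightarrow(\mathrm{iii})$, since (iii) immediately gives (ii) by taking $\lsim\, =\, \lsim_{\tau}$, and the real work is in the other two implications. Throughout, the argument requires no form of choice and stays within $\mathbf{ZF}$.

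For $(\mathrm{ii})\Rightarrow(\mathrm{i})$, I would observe that the topology $\tau[\lsim]$ of upward-closed sets is automatically closed under arbitrary intersections: if $\{U_i\}_{i\in I}\subseteq \tau[\lsim]$ and $x\in\bigcap_{i\in I}U_i$ with $x\lsim y$, then $y\in U_i$ for each $i\in I$ by the definition of each $U_i\in\tau[\lsim]$, hence $y\in\bigcap_{i\in I}U_i$. Therefore, if $\tau=\tau[\lsim]$, then $\tau$ is stable under arbitrary intersections, which is the Alexandroff condition.

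For $(\mathrm{i})\Rightarrow(\mathrm{iii})$, I would first check the inclusion $\tau\subseteq\tau[\lsim_{\tau}]$, which holds for \emph{any} topology: if $U\in\tau$, $x\in U$, and $x\lsim_{\tau}y$, then by the very definition of $\lsim_{\tau}$ applied to the open set $U$ we get $y\in U$, so $U$ is $\lsim_{\tau}$-increasing. The reverse inclusion is where the Alexandroff hypothesis is used. For each $x\in X$ define the minimal open neighborhood $U_{x}:=\bigcap\{V\in\tau:x\in V\}$; by (i), $U_{x}\in\tau$. The key observation is that $y\in U_{x}$ implies $x\lsim_{\tau}y$, because any $V\in\tau$ with $x\in V$ satisfies $U_{x}\subseteq V$, whence $y\in V$. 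Now suppose $U\in\tau[\lsim_{\tau}]$ and $x\in U$; for every $y\in U_{x}$ we have $x\lsim_{\tau}y$, so $y\in U$, giving $U_{x}\subseteq U$. Thus $U=\bigcup_{x\in U}U_{x}\in\tau$, and (iii) follows.

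The argument is genuinely short and the only nontrivial point is the existence and openness of $U_{x}$, which is precisely the content of (i); so I do not expect a real obstacle. The one thing to be careful about is the direction of $\lsim_{\tau}$: with the convention of Definition \ref{s1d1}, open sets are \emph{upward}-closed under $\lsim_{\tau}$, and $U_{x}$ collects exactly those points $y$ with $x\lsim_{\tau}y$. Once this is kept straight, the chain of implications closes and the theorem is proved.
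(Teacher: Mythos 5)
Your proof is correct. Note that the paper does not actually prove Theorem \ref{s1t5}; it simply cites \cite[Theorem 8.3.3]{Rich}, so there is no in-paper argument to compare against. Your cycle $(\mathrm{iii})\Rightarrow(\mathrm{ii})\Rightarrow(\mathrm{i})\Rightarrow(\mathrm{iii})$ is the standard proof: increasing sets are closed under arbitrary intersections, and under the Alexandroff hypothesis the minimal open neighborhood $U_x$ coincides with $\uparrow[\lsim_{\tau},x]$, which is exactly what makes every $\lsim_{\tau}$-increasing set a union of open sets. You also handled the one delicate point (the direction of $\lsim_{\tau}$, so that open sets are upward closed) correctly.
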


Before we pass to the body of this paper, in Section \ref{s2}, we establish our notation and terminology. At the end of Section \ref{s2}, we give an illuminating Example \ref{s2e12} which, together with Theorem \ref{s1t5}, has motivated us to this work. In Section \ref{s3}, among other helpful things, we slightly modify Theorem \ref{s1t5} by observing that, for every generalized topology $\mu$ on a non-empty set $X$, it holds that $\mu=\tau[\lsim_{\mu}]$ if and only if $\mu$ is an Alexandroff topology (see Theorem \ref{s3t7}).

In Section \ref{s4}, we apply the results obtained in Section \ref{s3} to a new characterization of iso-dense spaces. The term ``iso-dense'' was introduced in \cite{KTW1} in the sense of the following definition:

\begin{definition}
\label{s1d6}
Let $\mathbf{X}$ be a topological space and let $\Iso(\mathbf{X})$ be the set of all isolated points of $\mathbf{X}$.
\begin{enumerate}
\item[(i)] (Cf. \cite{KTW1}.) The space $\mathbf{X}$ is called an \emph{iso-dense space} if the set $\Iso(\mathbf{X})$ is dense in $\mathbf{X}$. 
\item[(ii)] (Cf. \cite[Chapter 1.3]{En}.) If $\Iso(\mathbf{X})=\emptyset$, then $\mathbf{X}$ is called a \emph{dense-in-itself} (or \emph{crowded}) space.
\end{enumerate}
\end{definition}

Let us remark that, by \cite[Proposition 5]{KTW1}, every scattered space is iso-dense. Clearly, every discrete space is iso-dense. Every compactification of an infinite discrete space is iso-dense. Indiscrete spaces consisting of at least two points are trivial dense-in-itself spaces. 

For the aims of Section \ref{s4}, we extend, in Section \ref{s2}, the concept of an iso-dense space to generalized topological spaces having dense sets of isolated points (see items (4) and (7) of Definition \ref{s2d1}). In Section \ref{s4}, given a generalized topological space $\mathbf{X}=\langle X, \mu\rangle$, we consider the family $\mathcal{D}(\mathbf{X})$ of all dense subsets of $\mathbf{X}$ (see item (5) of Definition \ref{s2d1}). Then the generalized topology determined by $\mathcal{D}(\mathbf{X})$ is the family $\mu[\mathcal{D}(\mathbf{X})]=\mathcal{D}(\mathbf{X})\cup\{\emptyset\}$. The generalized topology $\mu[\mathcal{D}(\mathbf{X})]$ is not necessarily a topology on $X$. The following two questions arise:

\begin{question}
\label{s1q7}
Under which conditions on a (generalized) topological space $\mathbf{X}$ is $\mu[\mathcal{D}(\mathbf{X})]$ a topology?
\end{question}

\begin{question}
\label{s1q8}
Under which conditions on a (generalized) topological space $\mathbf{X}$ is $\mu[\mathcal{D}(\mathbf{X})]$ an Alexandroff topology?
\end{question}

Obviously, an answer to Question \ref{s1q8} is a partial answer to Question \ref{s1q7}. Our main goal is to answer Question \ref{s1q8} and apply it to a new characterization of non-trivial iso-dense spaces in Section \ref{s4}. To this aim, we pay attention to the quasiorder $\lsim_{\mathcal{D}(\mathbf{X})}$ and the topology $\tau[\lsim_{\mathcal{D}(\mathbf{X})}]$.  Theorems \ref{s4t8} and \ref{s4t9} are the main results of Section \ref{s4}. Theorem \ref{s4t8} characterizes, in terms of $\lsim_{\mathcal{D}(\mathbf{X})}$, non-trivial iso-dense generalized topological spaces. Namely, we show in Theorem \ref{s4t8} that, for every non-indiscrete generalized topological space $\mathbf{X}$, the following are all equivalent: (i) $\mathbf{X}$ is iso-dense, (ii) $\tau[\lsim_{\mathcal{D}(\mathbf{X})}]=\mu[\mathcal{D}(\mathbf{X})]$, (iii) $\mu[\mathcal{D}(\mathbf{X})]$ is an Alexandroff topology. This answers Question \ref{s1q8}. We also notice that, for every non-indiscrete but dense-in-itself generalized topological space $\mathbf{X}$, the topology $\tau[\lsim_{\mathcal{D}(\mathbf{X})}]$ is discrete (see Theorem \ref{s4t7}). We modify these results to get a characterization of all non-indiscrete generalized topological spaces $\mathbf{X}$ having the property that the set of all not nowhere dense singletons of $\mathbf{X}$ is both dense and open in $\mathbf{X}$ (see Theorem \ref{s4t9}). This leads to other necessary and sufficient conditions for a generalized topological $T_1$-space $\langle X, \mu\rangle$ with $\mu\neq\{\emptyset\}$ to be iso-dense (see Corollary \ref{s4c12}).

Although the main results of Section \ref{s4} answer Question \ref{s1q8}, they are not satisfactory answers to Question \ref{s1q7}. To give a little  more light into Question \ref{s1q7} in Section \ref{s5}, we apply the following concept of a resolvable space introduced by Hewitt in \cite{Hew}:

\begin{definition}
\label{s1d9}
(Cf. \cite{Hew}.) A topological space $\mathbf{X}$ is called a \emph{resolvable space} if there exists a pair of disjoint dense sets in $\mathbf{X}$. Topological spaces which not resolvable are called \emph{irresolvable spaces}.
\end{definition}

Both classes of resolvable and irresolvable topological spaces have been widely studied in $\mathbf{ZFC}$ for years (see, e.g., \cite{Hew}, \cite{Pav}, \cite{RST} and \cite{DLRT}), but, to the best of our knowledge, they have not been investigated in $\mathbf{ZF}$ so far. We generalize Definition \ref{s1d9} to the concepts of a resolvable and an irresolvable generalized topological space (see items (13) and (14) of Definition \ref{s2d1}). Proposition \ref{s5p4} shows that if a non-indiscrete generalized topological space $\mathbf{X}=\langle X, \mu\rangle$ is resolvable, then $\mu[\mathcal{D}(\mathbf{X})]$ is not a topology. Therefore, to give a satisfactory answer to Question \ref{s1q7}, it is necessary to investigate the class of irresolvable generalized topological spaces in $\mathbf{ZF}$. To point out that the situation of this class in $\mathbf{ZF}$ significantly differs from that in $\mathbf{ZFC}$, we observe that even the following statement ``for every infinite set $X$ and the cofinite topology $\tau_{cof}(X)$ on $X$ (see Definition \ref{s2d14}), the space $\langle X, \tau_{cof}(X)\rangle$ is resolvable'', known to be true in $\mathbf{ZFC}$ (see \cite[p. 3]{RST}), is unprovable in $\mathbf{ZF}$ (see Theorem \ref{s5t11} and Corollary \ref{s5c12}). We leave deeper research on irresolvability in $\mathbf{ZF}$ and a more satisfactory answer to Question \ref{s1q7} for another article. For the convenience of readers, we include a short list of open problems in Section~\ref{s6}. 

\section{Preliminaries}
\label{s2}

We use standard set-theoretic notation. For a set $X$, $[X]^{<\omega}$ is the set of all finite subsets of $X$. The power set of $X$ is denoted by $\mathcal{P}(X)$. The symbols $\mathbb{R}$, $\mathbb{Z}$, $\mathbb{N}$ denote, respectively, the set of all real numbers, the set of all integers and the set of all positive integers. The symbol $\leq$ stands for the standard linear order on $\mathbb{R}$.

All topological notions used in this article, if not introduced here, are standard and can be found in \cite{En} or \cite{Rich}. 

Throughout this paper, if not stated otherwise, we denote (generalized) topological spaces with boldface letters and their underlying sets with lightface letters. 

\begin{definition}
\label{s2d1}  
Let  $\mathbf{X}=\langle X, \mu\rangle$ be a given (generalized) topological space.
\begin{enumerate}
\item Members of $\mu$ are said to be $\mu$-\emph{open sets} (or, simply, \emph{open sets} in $\mathbf{X}$). A subset $C$ of $X$ such that $X\setminus C$ is $\mu$-open is said to be $\mu$-\emph{closed} (or \emph{closed} in $\mathbf{X}$).
\item For a set $E\subseteq X$, $\cl_{\mathbf{X}}(E)$ denotes the closure of $E$ in $\mathbf{X}$ (that is, the intersection of all $\mu$-closed sets containing $E$), and $\inter_{\mathbf{X}}(E)$ denotes the interior of $E$ in $\mathbf{X}$ (that is, the union of all $\mu$-open sets contained in $E$).
\item A subset $E$ of $X$ is called \emph{nowhere dense} in $\mathbf{X}$ (or, equivalently, $\mu$-\emph{nowhere dense}) if $\inter_{\mathbf{X}}(\cl_{\mathbf{X}}(E))=\emptyset$ (cf. \cite{TCh}). The collection of all $\mu$-nowhere dense sets is denoted by $\mathcal{ND}(\mathbf{X})$. 
\item  We say that a point $x\in X$ is an \emph{isolated point} of $\mathbf{X}$ (or a $\mu$-\emph{isolated point}) if $\{x\}\in\mu$. The set of all $\mu$-isolated points is denoted by $\Iso(\mu)$ or $\Iso(\mathbf{X})$.
\item A set $D\subseteq X$ will be called $\mu$-\emph{dense} (or \emph{dense} in $\mathbf{X}$) if for every $U\in\mu\setminus\{\emptyset\}$, $U\cap D\neq\emptyset$.  The collection of all $\mu$-dense sets is denoted by $\mathcal{D}(\mathbf{X})$ or by $\mathcal{D}(\mu)$.
\item $\mathcal{DO}(\mathbf{X}):=\mu\cap\mathcal{D}(\mathbf{X})$.
\item The space $\mathbf{X}$ is called an \emph{iso-dense (generalized) topological space} if $\Iso(\mathbf{X})\in\mathcal{D}(\mathbf{X})$. 
\item If $\Iso(\mathbf{X})=\emptyset$, $\mathbf{X}$ is called a \emph{dense-in-itself (generalized) topological space}.
\item $\mathbf{X}$ is called a $T_0$-\emph{space} if, for every pair $x,y$ of distinct points of $X$, there exists $U\in \mu$ such that $U\cap\{x, y\}$ is a singleton (cf. \cite{Cs0, MAMK}).
\item $\mathbf{X}$ is called a $T_1$-\emph{space} if every singleton of $\mathbf{X}$ is $\mu$-closed (cf. \cite{Cs0, MAMK}).
\item $\mathbf{X}$ is called an \emph{indiscrete space} or \emph{trivial space} if $\mu\subseteq\{\emptyset, X\}$.
\item $\mathbf{X}$ is called a \emph{discrete space} if $\mu=\mathcal{P}(X)$.
\item $\mathbf{X}$ is called a \emph{resolvable space} if there exists a set $D\in\mathcal{D}(\mathbf{X})$ such that $X\setminus D\in\mathcal{D}(\mathbf{X})$. If $\mathbf{X}$ is a resolvable space, the generalized topology $\mu$ is said to be \emph{resolvable}.
\item $\mathbf{X}$ is called an \emph{irresolvable space} if it is not a resolvable space.  If $\mathbf{X}$ is an irresolvable space, the generalized topology $\mu$ is said to be \emph{irresolvable}.
\end{enumerate}
\end{definition} 

\begin{remark}
\label{s2r2}
Let $\mu$ be a given generalized topology on a set $X$ and let $\tau$ be the coarsest topology on $X$ containing $\mu$. Then every $\mu$-isolated point is $\tau$-isolated. A point $x\in X$ is $\tau$-isolated if and only if there exists $\mathcal{U}\in [\mu\cup\{X\}]^{<\omega}\setminus\{\emptyset\}$ such that $\{x\}=\bigcap\mathcal{U}$. Every $\tau$-dense subset of $X$ is $\mu$-dense. A subset $D$ of $X$ is $\tau$-dense if and only if, for every $\mathcal{U}\in [\mu\cup\{X\}]^{<\omega}\setminus\{\emptyset\}$, $\bigcap\mathcal{U}\neq\emptyset$ implies $D\cap\bigcap\mathcal{U}\neq\emptyset$.
\end{remark}

\begin{remark}
\label{s2r3}
For a generalized topological space $\mathbf{X}$, the family $\mathcal{D}(\mathbf{X})\cup\{\emptyset\}$ is a generalized topology but not necessarily a topology. Clearly, $\mathcal{D}(\mathbf{X})\cup\{\emptyset\}$ is a topology if and only if, for every pair $A, B$ of dense sets in $\mathbf{X}$, the intersection $A\cap B$ is either empty or dense in $\mathbf{X}$. 
\end{remark}

In view of Remark \ref{s2r3}, the following question is equivalent to Question \ref{s1q7}.

\begin{question}
\label{s2q4}
Under which conditions on a (generalized) topological space $\mathbf{X}$ is the intersection of any two dense sets in $\mathbf{X}$ either empty or dense in $\mathbf{X}$?
\end{question} 

If a generalized topological space $\mathbf{X}$ is not strong, $\mathcal{DO}(\mathbf{X})\cup\{\emptyset\}$ is not a topology on $X$. The following example shows that even for a strong generalized topological space $\mathbf{X}$, the generalized topology $\mathcal{DO}(\mathbf{X})\cup\{\emptyset\}$ need not be a topology.

\begin{example}
\label{s2e5}
For $X=\{1, 2, 3\}$, let $\mu=\{\emptyset, X, \{1,2\}, \{2, 3\}\}$ and $\mathbf{X}=\langle X, \mu\rangle$. Then $\mathcal{DO}(\mathbf{X})\cup\{\emptyset\}=\mu$ is not a topology. Since the sets $\{2\}$ and $\{1,3\}$ are dense in $\mathbf{X}$,  the generalized topological space $\mathbf{X}$ is resolvable.
\end{example}

The following proposition is immediate.

\begin{proposition}
\label{s2p6}
 Let $\mathbf{X}=\langle X, \mu\rangle$ be a strong generalized topological space such that, for every $U\in\mu$ and every $D\in\mathcal{DO}(\mathbf{X})$, $U\cap D\in\mu$. Then $\mathcal{DO}(\mathbf{X})\cup\{\emptyset\}$ is a topology on $X$. In particular, for every topological space $\mathbf{X}$, $\mathcal{DO}(\mathbf{X})\cup\{\emptyset\}$ is a topology on $X$.
\end{proposition}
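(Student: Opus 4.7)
The plan is to verify directly the four axioms for a topology on the family $\mathcal{DO}(\mathbf{X})\cup\{\emptyset\}$. Containment of $\emptyset$ is immediate, and since $\mu$ is strong we have $X\in\mu$ with $X$ meeting every non-empty $\mu$-open set, so $X\in\mathcal{DO}(\mathbf{X})$.

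For arbitrary unions, let $\mathcal{V}\subseteq\mathcal{DO}(\mathbf{X})\cup\{\emptyset\}$. Because $\mu$ is a generalized topology, $\bigcup\mathcal{V}\in\mu$. If some $V_0\in\mathcal{V}$ is dense, then every non-empty $\mu$-open set meets $V_0\subseteq\bigcup\mathcal{V}$, so $\bigcup\mathcal{V}\in\mathcal{DO}(\mathbf{X})$; otherwise $\bigcup\mathcal{V}=\emptyset$.

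The heart of the proof is closure under binary intersections, after which induction handles all finite intersections. Given $D_1,D_2\in\mathcal{DO}(\mathbf{X})$, applying the hypothesis with $U=D_1\in\mu$ and $D=D_2$ gives $D_1\cap D_2\in\mu$. To show this intersection is either empty or dense, I would take an arbitrary $V\in\mu\setminus\{\emptyset\}$ and invoke the hypothesis a second time, this time with $U=V$ and $D=D_2$, to get $V\cap D_2\in\mu$; density of $D_2$ forces $V\cap D_2\neq\emptyset$. Then $V\cap D_2$ is a non-empty $\mu$-open set, and density of $D_1$ yields $D_1\cap(V\cap D_2)\neq\emptyset$, i.e.\ $V\cap(D_1\cap D_2)\neq\emptyset$. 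Taking $V=X$ (permitted because $\mu$ is strong) simultaneously shows $D_1\cap D_2\neq\emptyset$, and then the same argument for arbitrary $V$ shows $D_1\cap D_2$ is dense; thus $D_1\cap D_2\in\mathcal{DO}(\mathbf{X})$.

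For the ``in particular'' clause, if $\mathbf{X}$ is a topological space then $\mu$ is closed under finite intersections, and because $\mathcal{DO}(\mathbf{X})\subseteq\mu$ we automatically have $U\cap D\in\mu$ whenever $U\in\mu$ and $D\in\mathcal{DO}(\mathbf{X})$, so the hypothesis of the proposition is satisfied and the first part applies. There is no real obstacle, only a small conceptual point: one must apply the closure hypothesis twice in the binary-intersection step, using $V\cap D_2$ as an auxiliary open set against which to test the density of $D_1$.
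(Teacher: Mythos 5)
Your proof is correct and is exactly the direct verification the authors have in mind: the paper offers no argument at all, simply declaring the proposition ``immediate,'' and your write-up supplies the one genuinely non-obvious detail, namely that the closure hypothesis must be invoked a second time (on $V\cap D_2$) so that density of $D_1$ can be tested against a $\mu$-open set. Nothing further is needed.
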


In the notation of \cite[Definition 8.5.1]{Rich}, for a given set $X$ and its subset $S$, 
\[\super(S):=\{U\in\mathcal{P}(X): S\subseteq U\}\cup\{\emptyset\}.\]
The family $\super(S)$ is an Alexandroff topology on $X$ called the \emph{topology of surpersets} of $S$. If $S$ is a non-empty subset of $X$ and $\mathbf{X}=\langle X, \super(S)\rangle$, then $\mathbf{X}$ is irresolvable, $\mathcal{D}(\mathbf{X})\cup\{\emptyset\}=\super(S)$ and $\mathcal{D}(\mathbf{X})$ is closed under finite intersections. The following proposition holds.

\begin{proposition}
\label{s2p7}
For every (generalized) topological space $\mathbf{X}$,  $\mathcal{D}(\mathbf{X})\subseteq\super(\Iso(\mathbf{X}))$. Furthermore, a non-indiscrete (generalized) topological space $\mathbf{X}$ is iso-dense if and only if \[\super(\Iso(\mathbf{X}))=\mathcal{D}(\mathbf{X})\cup\{\emptyset\}.\]
\end{proposition}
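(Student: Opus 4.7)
The plan is to prove the inclusion first, and then obtain the equivalence as a short application of that inclusion together with the trivial fact that $\Iso(\mathbf{X})\in\super(\Iso(\mathbf{X}))$.

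For the first assertion, I would take an arbitrary $D\in\mathcal{D}(\mathbf{X})$. If $D=\emptyset$, then $D\in\super(\Iso(\mathbf{X}))$ by the very definition of $\super$. If $D\neq\emptyset$, pick any $x\in\Iso(\mathbf{X})$; then $\{x\}\in\mu\setminus\{\emptyset\}$, so $\mu$-density of $D$ gives $\{x\}\cap D\neq\emptyset$, i.e.\ $x\in D$. Hence $\Iso(\mathbf{X})\subseteq D$ and $D\in\super(\Iso(\mathbf{X}))$.

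For the equivalence, assume throughout that $\mathbf{X}$ is non-indiscrete. For the ``only if'' direction, suppose $\Iso(\mathbf{X})\in\mathcal{D}(\mathbf{X})$. The inclusion $\mathcal{D}(\mathbf{X})\cup\{\emptyset\}\subseteq\super(\Iso(\mathbf{X}))$ is already established. For the reverse inclusion, let $U\in\super(\Iso(\mathbf{X}))\setminus\{\emptyset\}$, so $\Iso(\mathbf{X})\subseteq U$. For any $V\in\mu\setminus\{\emptyset\}$, density of $\Iso(\mathbf{X})$ yields $V\cap\Iso(\mathbf{X})\neq\emptyset$, and since $\Iso(\mathbf{X})\subseteq U$ we get $V\cap U\neq\emptyset$; thus $U\in\mathcal{D}(\mathbf{X})$.

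For the ``if'' direction, suppose $\super(\Iso(\mathbf{X}))=\mathcal{D}(\mathbf{X})\cup\{\emptyset\}$. Since $\Iso(\mathbf{X})\in\super(\Iso(\mathbf{X}))$, we have $\Iso(\mathbf{X})\in\mathcal{D}(\mathbf{X})\cup\{\emptyset\}$; either $\Iso(\mathbf{X})\in\mathcal{D}(\mathbf{X})$, proving iso-density, or $\Iso(\mathbf{X})=\emptyset$. In the latter case, $\super(\Iso(\mathbf{X}))=\mathcal{P}(X)$, so every non-empty subset of $X$ would be $\mu$-dense. The only step where I expect any care is this last case: I would use non-indiscreteness to pick $U\in\mu$ with $\emptyset\neq U\neq X$; then $X\setminus U$ is a non-empty subset of $X$ that fails to meet the non-empty $\mu$-open set $U$, contradicting the supposed density of $X\setminus U$. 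Hence $\Iso(\mathbf{X})\neq\emptyset$ and $\mathbf{X}$ is iso-dense.
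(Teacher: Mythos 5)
Your proof is correct: the inclusion argument (every isolated point's singleton is a non-empty open set, hence lies in every dense set), the observation that supersets of a dense set are dense, and the use of non-indiscreteness to rule out $\Iso(\mathbf{X})=\emptyset$ in the ``if'' direction are all sound, and you correctly handle the degenerate case $\emptyset\in\mathcal{D}(\mathbf{X})$. The paper states Proposition \ref{s2p7} without proof, so there is nothing to compare against; your argument is exactly the routine verification the authors evidently had in mind.
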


Proposition  \ref{s2p7} also leads to Question \ref{s1q8} motivating this research. As we have already mentioned in Section \ref{Intro}, an answer to  Question \ref{s1q8} is given in Section \ref{s4}.

Since Alexandroff topologies are determined by quasiorders, we need to have a look at quasiorders. All one needs to know about quasiorders to understand the forthcoming sections can be found in \cite[Chapter 8]{Rich} and \cite{Nach}. We establish our notation and terminology below.

\begin{definition}
\label{s2d8}
Let $\lsim$ be a quasiorder on a set $X$.
\begin{enumerate}
\item For every $x\in X$, 
\[\uparrow[\lsim, x]:=\{y\in X: x\lsim y\}\text{ and } \downarrow[\lsim, x]:=\{y\in X: y\lsim x\}. \]
\item For all $x, y\in X$, $x\approx y$ means that $x\lsim y$ and $y\lsim x$.
\item A subset $P$ of $X$ is said to be $\lsim$-increasing (respectively, $\lsim$-decreasing) if, for all $x,y\in X$ such that $x\in P$ and $x\lsim y$ (respectively, $y\lsim x$), we have $y\in P$.
\item An element $a\in X$ is called $\lsim$-\emph{maximal} (respectively, $\lsim$-\emph{minimal}) if $\uparrow[\lsim, m]=\{a\}$ (respectively, $\downarrow[\lsim, a]=\{a\}$).
\item An element $a\in X$ is called \emph{weakly} $\lsim$-\emph{maximal} (respectively, \emph{weakly} $\lsim$-\emph{minimal}) if, for every $x\in X$ such that $a\lsim x$ (respectively, $x\lsim a$), we have $x\approx a$.
\item The \emph{dual quasiorder} from $\lsim$ is the quasiorder $\lsim^{d}$ defined as follows:
\[(\forall x,y\in X)(x\lsim^{d} y\leftrightarrow y\lsim x).\]
\item The \emph{dual specialization topology} from $\lsim$ is the topology $\tau[\lsim^{d}]$.
\end{enumerate}
\end{definition}
 
\begin{remark}
\label{s2r9}
Let  $\lsim$  be a given quasiorder on a set $X$. The specialization topology $\tau[\lsim]$ consists of all $\lsim$-increasing sets. The dual specialization topology $\tau[\lsim^d]$ consists of all $\lsim$-decreasing sets. Obviously, $\Iso(\tau[\lsim])$ is the set of all $\lsim$-maximal elements, and $\Iso(\tau[\lsim^d])$ is the set of all $\lsim$-minimal elements.
\end{remark}

\begin{remark}
\label{s2r10}
If $\lsim$ is a partial order, the notions of a weakly $\lsim$-maximal element and a $\lsim$-maximal element are equivalent (so are the notions of a weakly $\lsim$-minimal and a $\lsim$-minimal element). 
\end{remark}

\begin{remark} 
\label{s2r11}
Given a quasiorder $\lsim$ on a set $X$, the relation $\approx$ is an equivalence relation. One can define a partial order $\preceq$ on the set $X/_{\approx}$ of all equivalence classes of $\approx$ as follows: for all $a,b\in X/_{\approx}$, $a\preceq b$ if and only if, for each $x\in a$ and each $y\in b$, $x\lsim y$ (see \cite[Theorem 8.2.2]{Rich}).  Then an element $m\in X$ is weakly $\lsim$-maximal (respectively, weakly $\lsim$-minimal) if and only if the equivalence class $[m]_{\approx}$ of $\approx$ containing $m$ is $\preceq$-maximal (respectively, $\preceq$-minimal).
\end{remark}

The following example suggests some of the problems we consider.

\begin{example} 
\label{s2e12}
Consider the partial order on $\mathbb{R}$ defined as follows. For all $x,y\in\mathbb{R}$, $x \lsim y$ if and only if there exists $n \in \mathbb{Z}$ such that either $x, y \in [2n-1, 2n]$ and $y - x \geq 0$ or  $x, y \in [2n, 2n+1]$ and $x - y \geq 0$, as suggested in the figure below. 

\begin{figure}
\begin{center}
\setlength{\unitlength}{2pt}
 \begin{picture}(120,40)(0,-15)
 \put(2,5){$\cdots$}
 \put(20,0){\line(-1,1){5}}
 \multiput(20,0)(30,0){3}{\line(1,1){15}}
 \multiput(35,15)(30,0){3}{\line(1,-1){15}}
 \put(50,0){\circle*{2}}
  \put(65,15){\circle*{2}}
   \put(80,0){\circle*{2}}
    \put(110,0){\line(1,1){5}}
  \put(120,5){$\cdots$}
  \put(50, -7){\makebox[0pt]{$2n-1$}}
   \put(65, 19){\makebox[0pt]{$2n$}}
    \put(80, -7){\makebox[0pt]{$2n+1$}}
 \end{picture}
 \end{center}
\caption{An iso-dense space $\mathbf{X}$ for which $\mathcal{D}(\mathbf{X}) \cup \{\emptyset\}$ is a topology.} \label{fig1}
 \end{figure}
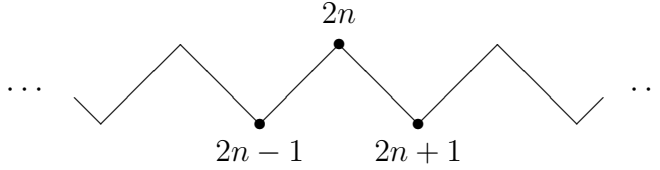

\noindent
Every $\lsim$-increasing set contains a $\lsim$-maximal element $2n$, so every non-empty $\tau[\lsim]$-open set intersects the set $M = \{2n : n \in \mathbb{Z}\}$ of all $\lsim$-maximal elements.  Thus, $M$ is dense in $\langle\mathbb{R}, \tau[\lsim]\rangle$. Since $M=\Iso(\tau[\lsim])$, the space $\langle\mathbb{R}, \tau[\lsim]\rangle$ is iso-dense. Therefore $\mathcal{D}(\tau[\lsim])\cup\{\emptyset\}=\super(\Iso(\tau[\lsim]))$. In this case, the collection of all $\tau[\lsim]$-dense sets is closed under finite intersections, and, for every element $x\in\mathbb{R}$, there exists a $\lsim$-maximal element $m$ such that $x\lsim m$.  
\end{example}

Let us briefly summarize what happens in a general situation similar to that in Example \ref{s2e12}.

\begin{proposition}
\label{s2p13}
Let $\langle X, \lsim\rangle$ be a quasiordered set, and let $M$ be the set of all $\lsim$-maximal elements. If, for every $x\in X$, there exists $m\in M$ such that $x\lsim m$, then $M=\Iso(\tau[\lsim])$, the space $\langle X, \tau[\lsim]\rangle$ is iso-dense, and $\mathcal{D}(\tau[\lsim])\cup\{\emptyset\}$ is the Alexandroff topology $\super(M)$.
\end{proposition}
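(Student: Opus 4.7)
The plan is to prove the three claims in the order they appear, leveraging Remark \ref{s2r9} and the characterization of dense sets via intersections with non-empty open sets. The identity $M=\Iso(\tau[\lsim])$ is essentially free: by Remark \ref{s2r9}, $\Iso(\tau[\lsim])$ is precisely the set of $\lsim$-maximal elements, so no argument beyond citing that remark is needed.

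Next I would verify that $M$ is $\tau[\lsim]$-dense, which immediately gives iso-density since $M=\Iso(\tau[\lsim])$. Fix an arbitrary non-empty $U\in\tau[\lsim]$ and pick $x\in U$. By hypothesis, there is $m\in M$ with $x\lsim m$. Because $U$ is $\lsim$-increasing, $m\in U$, so $U\cap M\neq\emptyset$, witnessing that $M\in\mathcal{D}(\tau[\lsim])$.

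For the third claim, I would establish the equality $\mathcal{D}(\tau[\lsim])\cup\{\emptyset\}=\super(M)$ by two inclusions. For $\subseteq$, take any non-empty $D\in\mathcal{D}(\tau[\lsim])$ and any $m\in M$. Since $\uparrow[\lsim,m]=\{m\}$, the singleton $\{m\}$ is $\lsim$-increasing and hence a non-empty member of $\tau[\lsim]$, so density of $D$ forces $m\in D$; thus $M\subseteq D$, i.e., $D\in\super(M)$. For $\supseteq$, take a non-empty $D\in\super(M)$ and any non-empty $U\in\tau[\lsim]$; by the argument used for iso-density there is $m\in U\cap M\subseteq U\cap D$, so $D\in\mathcal{D}(\tau[\lsim])$. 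Finally, $\super(M)$ is Alexandroff because arbitrary intersections of supersets of $M$ remain supersets of $M$, as already noted in the preceding paragraph of the paper.

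There is no real obstacle here: the hypothesis ``every $x$ lies below some $\lsim$-maximal $m$'' is tailor-made to give both density of $M$ and the crucial witness in $U\cap D$, and the key trick is simply recognizing that $\{m\}\in\tau[\lsim]$ for each $m\in M$. One minor point worth a line of care is the degenerate case $X=\emptyset$ (where $M=\emptyset$ and both sides reduce to $\{\emptyset\}$), which the argument above handles without modification.
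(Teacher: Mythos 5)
Your proof is correct and is exactly the argument the paper leaves implicit (Proposition \ref{s2p13} is stated without proof as a summary of the situation in Example \ref{s2e12}): identify $\Iso(\tau[\lsim])$ with $M$ via Remark \ref{s2r9}, use the hypothesis plus increasingness to see every non-empty open set meets $M$, and use the openness of each $\{m\}$ to see every non-empty dense set contains $M$. All steps, including the degenerate case $X=\emptyset$, check out.
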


In Section \ref{s5}, to discuss some difficulties with getting a satisfactory answer to Question \ref{s1q7} in $\mathbf{ZF}$, we apply cofinite topologies and amorphous sets. 

\begin{definition}
\label{s2d14}
For a set $X$, let
\[\tau_{cof}(X)=\{\emptyset\}\cup\{U\subseteq X: X\setminus U\in [X]^{<\omega}\}.\]
The topology $\tau_{cof}(X)$ is called the \emph{cofinite topology} on $X$.
\end{definition}

\begin{definition}
\label{s2d15}
An infinite set $X$ is called an \emph{amorphous set} if, for every infinite subset $Z$ of $X$, the set $X\setminus Z$ is finite. (See \cite[Note 57]{HR}.)
\end{definition}

\begin{remark}
\label{s2r16}
Form 64 of \cite{HR} is the statement: There are no amorphous sets. Since there are models of $\mathbf{ZF}$ in which Form 64 of \cite{HR} is false, it may happen in $\mathbf{ZF}$ that there are amorphous sets. For instance, Form 64 of \cite{HR} is false in Monro's Model III (model $\mathcal{M}37$ in \cite{HR}). Every statement proved to be equivalent to Form 64 of \cite{HR} in $\mathbf{ZF}$ is independent of $\mathbf{ZF}$.
 \end{remark}
 
In Theorem \ref{s5t11}, we show three new statements equivalent to \cite[Form 64]{HR} in $\mathbf{ZF}$. We conclude with the independence results in Corollary \ref{s5c12}. Apart from the independence results, all other results of this work are obtained in $\mathbf{ZF}$.

\section{The quasiorders and generalized topologies determined by families of sets}
\label{s3}

Throughout this section, we assume that $\mathcal{A}$ is a non-empty family of subsets of a set $X$ such that $\mathcal{A}\neq\{\emptyset\}$. 
The main aim of this section is to compare the generalized topologies $\mu[\mathcal{A}]$, $\tilde{\mu}[\mathcal{A}]$, $\tau[\lsim_{\mathcal{A}}]$ and $\tau[\lsim_{\tilde{\mu}[\mathcal{A}]}]$ (see Definitions \ref{s1d1} and \ref{s1d3}). The following illuminating example shows that these topologies can be pairwise distinct.

\begin{example}
\label{s3e1}
Let $X=\{1,2,3,4\}$ and $\mathcal{A}=\{\{1,2\}, \{2,3,4\}\}$. In this case, we have the following:
\begin{enumerate}
 \item[(i)] $\mu[\mathcal{A}]=\{\emptyset, X\}\cup\mathcal{A}$ and $\tilde{\mu}[\mathcal{A}]=\mu[\mathcal{A}]\cup\{ \{1,2,3\}, \{1,2,4\}\}$; 

\item[(ii)] $\uparrow[\lsim_{\mathcal{A}}, 1]=\{1,2\}$, $\uparrow[\lsim_{\mathcal{A}}, 2]=\{2\}$, $\uparrow[\lsim_{\mathcal{A}}, 3]=\uparrow[\lsim_{\mathcal{A}}, 4]=\{2,3,4\}$;

\item[(iii)] $\tau[\lsim_{\mathcal{A}}]=\mu[\mathcal{A}]\cup\{ \{2\}\}$;

\item[(iv)] $\uparrow[\lsim_{\tilde{\mu}[\mathcal{A}]},1]=\{1,2\}$, $\uparrow[\lsim_{\tilde{\mu}[\mathcal{A}]},2] =\{2\}$, $\uparrow[\lsim_{\tilde{\mu}[\mathcal{A}]},3]=\{2,3\}$ and $\uparrow~[\lsim_{\tilde{\mu}[\mathcal{A}]},4]=\{2, 4\}$;
\item[(v)]  $\tau[\lsim_{\tilde{\mu}[\mathcal{A}]}]=\tilde{\mu}[\mathcal{A}]\cup\{\{2\}, \{2,3\}, \{2,4\}\}$;
\item[(vi)]  $\mu[\mathcal{A}]$, $\tilde{\mu}[\mathcal{A}]$, $\tau[\lsim_{\mathcal{A}}]$ and $\tau[\lsim_{\tilde{\mu}[\mathcal{A}]}]$ are all irresolvable.
\end{enumerate}
\end{example}

\begin{definition}
\label{s3d2}
\begin{enumerate}
\item[(i)] $I(\mathcal{A}):=\bigcap(\mathcal{A}\setminus\{\emptyset\}).$
\item[(ii)] For every $x\in X$, $\mathcal{B}_{\mathcal{A}}(x):=\{A\in\mathcal{A}: x\in A\}$.
\item[(iii)] If $x\in X$ is such that $\mathcal{B}_{\mathcal{A}}(x)\neq\emptyset$, then $I_{\mathcal{A}}(x):=\bigcap\mathcal{B}_{\mathcal{A}}(x)$.
\end{enumerate}
\end{definition}

We state several simple facts in the following proposition.

\begin{proposition}
\label{s3p3}
\begin{enumerate}
\item $\tilde{\mu}[\mathcal{A}]=\{\emptyset\}\cup\{U\subseteq X: (\exists A\in\mathcal{A}\setminus\{\emptyset\})A\subseteq U\}$.
\item $I(\mathcal{A})=I(\mu[\mathcal{A}])=I(\tilde{\mu}[\mathcal{A}])$.
\item $I(\mathcal{A})\in\mu[\mathcal{A}]$ if and only if $I(\mathcal{A})\in\tilde{\mu}[\mathcal{A}]$.
\item For every $x\in X$, we have $\mathcal{B}_{\mathcal{A}}(x)\neq\emptyset$ if and only if $\mathcal{B}_{\mu[\mathcal{A}]}(x)\neq\emptyset$.
\item For every $x\in X$, if $\mathcal{B}_{\mathcal{A}}(x)\neq\emptyset$, then $\mathcal{B}_{\tilde{\mu}[\mathcal{A}]}(x)\neq\emptyset$ and \[I_{\tilde{\mu}[\mathcal{A}]}(x)\subseteq I_{\mathcal{A}}(x)=I_{\mu[\mathcal{A}]}(x).\]
\end{enumerate}
\end{proposition}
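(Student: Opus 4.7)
The proposition bundles five bookkeeping facts about the interaction between $\mathcal{A}$, $\mu[\mathcal{A}]$ and $\tilde{\mu}[\mathcal{A}]$, and each boils down to unwinding Definitions \ref{s1d3} and \ref{s3d2}. The overarching observations I would record up front are that $\mathcal{A}\subseteq\mu[\mathcal{A}]\subseteq\tilde{\mu}[\mathcal{A}]$ (any $A\in\mathcal{A}$ satisfies $x\in A\subseteq A$ for every $x\in A$, so $A\in\mu[\mathcal{A}]$), and that the standing hypothesis $\mathcal{A}\neq\{\emptyset\}$ guarantees $\mathcal{A}\setminus\{\emptyset\}\neq\emptyset$ so the intersections in Definition \ref{s3d2} are over non-empty index sets. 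With these in hand I would handle the items in order.

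For (1), note that by definition every non-empty $U\in\mu[\mathcal{A}]$ contains some $A\in\mathcal{A}$ with $A\neq\emptyset$; conversely, any $U$ containing a non-empty $A\in\mathcal{A}$ is already placed in $\tilde{\mu}[\mathcal{A}]$ by the second clause of Definition \ref{s1d3}(ii). For (2), the inclusions $\mathcal{A}\setminus\{\emptyset\}\subseteq\mu[\mathcal{A}]\setminus\{\emptyset\}\subseteq\tilde{\mu}[\mathcal{A}]\setminus\{\emptyset\}$ give $I(\tilde{\mu}[\mathcal{A}])\subseteq I(\mu[\mathcal{A}])\subseteq I(\mathcal{A})$ immediately, and the reverse $I(\mathcal{A})\subseteq I(\tilde{\mu}[\mathcal{A}])$ follows from item (1): every non-empty $U\in\tilde{\mu}[\mathcal{A}]$ contains some non-empty $A\in\mathcal{A}$, hence contains $I(\mathcal{A})$.

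Item (3) is trivial in the forward direction. For the converse, suppose $I(\mathcal{A})\in\tilde{\mu}[\mathcal{A}]$. If $I(\mathcal{A})=\emptyset$ there is nothing to prove. Otherwise, using (1) pick $A\in\mathcal{A}\setminus\{\emptyset\}$ with $A\subseteq I(\mathcal{A})$; since $I(\mathcal{A})\subseteq A$ by definition of $I(\mathcal{A})$, we get $A=I(\mathcal{A})\in\mathcal{A}\subseteq\mu[\mathcal{A}]$. Item (4) unpacks the definition of $\mu[\mathcal{A}]$ directly: $\mathcal{A}\subseteq\mu[\mathcal{A}]$ gives one direction, and any $U\in\mathcal{B}_{\mu[\mathcal{A}]}(x)$ supplies an $A\in\mathcal{A}$ with $x\in A\subseteq U$, whence $A\in\mathcal{B}_{\mathcal{A}}(x)$.

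For (5), non-emptiness of $\mathcal{B}_{\tilde{\mu}[\mathcal{A}]}(x)$ follows from $\mathcal{B}_{\mathcal{A}}(x)\subseteq\mathcal{B}_{\tilde{\mu}[\mathcal{A}]}(x)$, and the inclusion $I_{\tilde{\mu}[\mathcal{A}]}(x)\subseteq I_{\mathcal{A}}(x)$ is the corresponding reversal for intersections. The equality $I_{\mathcal{A}}(x)=I_{\mu[\mathcal{A}]}(x)$ is the only step needing a brief argument in both directions: one inclusion comes from $\mathcal{A}\subseteq\mu[\mathcal{A}]$, while for the other, given $y\in I_{\mathcal{A}}(x)$ and $U\in\mathcal{B}_{\mu[\mathcal{A}]}(x)$, pick $A\in\mathcal{A}$ with $x\in A\subseteq U$, so $A\in\mathcal{B}_{\mathcal{A}}(x)$ forces $y\in A\subseteq U$. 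No step here presents a genuine obstacle; the only thing to watch is to use (1) consistently when manipulating $\tilde{\mu}[\mathcal{A}]$ and to keep track of the exclusion of $\emptyset$ in the definitions of $I(\cdot)$.
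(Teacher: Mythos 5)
Your proof is correct, and it proceeds exactly as intended: the paper states Proposition \ref{s3p3} as a list of ``simple facts'' with no written proof, and your systematic unwinding of Definitions \ref{s1d3} and \ref{s3d2} via the chain $\mathcal{A}\subseteq\mu[\mathcal{A}]\subseteq\tilde{\mu}[\mathcal{A}]$ is the argument the authors leave to the reader. The one place that genuinely needs a small idea, the converse in item (3) via $A\subseteq I(\mathcal{A})\subseteq A$, you handle correctly.
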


\begin{proposition}
\label{s3p4}
\begin{enumerate}
\item For all $x,y\in X$, we have:
 \[x\lsim_{\mathcal{A}}y\leftrightarrow \mathcal{B}_{\mathcal{A}}(x)\subseteq\mathcal{B}_{\mathcal{A}}(y).\]
 \item $\lsim_{\tilde{\mu}[\mathcal{A}]}\subseteq \lsim_{\mathcal{A}}=\lsim_{\mu[\mathcal{A}]}.$
 \item For every $x\in X$ such that $\mathcal{B}_{\mathcal{A}}(x)\neq\emptyset$, we have $I_{\mathcal{A}}(x)=\uparrow[\lsim_{\mathcal{A}}, x].$ 
 \item For every $z\in I(\mathcal{A})$, $I(\mathcal{A})=\uparrow[\lsim_{\mathcal{A}}, z]$. In consequence, $I(\mathcal{A})\in\tau[\lsim_{\mathcal{A}}]$. 
 \item $\mu[\mathcal{A}]\subseteq\tau[\lsim_{\mathcal{A}}]$. 
\end{enumerate}
\end{proposition}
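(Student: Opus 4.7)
The plan is to establish each of the five assertions by unwinding the relevant definitions; no deep argument is needed, only careful bookkeeping. For part~(1), I would read off from Definition~\ref{s1d1} that $x\lsim_{\mathcal{A}}y$ says exactly that whenever $A\in\mathcal{A}$ contains $x$, it also contains $y$, which is the set-theoretic translation of $\mathcal{B}_{\mathcal{A}}(x)\subseteq\mathcal{B}_{\mathcal{A}}(y)$.

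For part~(2), I would first observe the chain $\mathcal{A}\subseteq\mu[\mathcal{A}]\subseteq\tilde{\mu}[\mathcal{A}]$ (since every $A\in\mathcal{A}$ trivially satisfies the base condition at each of its points, and $\tilde{\mu}[\mathcal{A}]$ is defined as an extension of $\mu[\mathcal{A}]$). Enlarging the defining family imposes more constraints on the associated quasiorder, so $\lsim_{\tilde{\mu}[\mathcal{A}]}\subseteq\lsim_{\mu[\mathcal{A}]}\subseteq\lsim_{\mathcal{A}}$ follows formally. For the reverse inclusion $\lsim_{\mathcal{A}}\subseteq\lsim_{\mu[\mathcal{A}]}$, I would use the fact, already recorded just after Definition~\ref{s1d3}, that $\mathcal{A}$ is a base for $\mu[\mathcal{A}]$: given $x\lsim_{\mathcal{A}}y$ and $U\in\mu[\mathcal{A}]$ with $x\in U$, pick $A\in\mathcal{A}$ with $x\in A\subseteq U$; then $y\in A\subseteq U$ by the defining property of $\lsim_{\mathcal{A}}$.

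For parts (3) and (4), the key observation is that, when $\mathcal{B}_{\mathcal{A}}(x)\neq\emptyset$, a point $y$ lies in $I_{\mathcal{A}}(x)=\bigcap\mathcal{B}_{\mathcal{A}}(x)$ precisely when $y$ belongs to every $A\in\mathcal{A}$ containing $x$, which is the definition of $x\lsim_{\mathcal{A}}y$, i.e., of $y\in{\uparrow}[\lsim_{\mathcal{A}},x]$. For part~(4) I would then note that any $z\in I(\mathcal{A})$ satisfies $\mathcal{B}_{\mathcal{A}}(z)=\mathcal{A}\setminus\{\emptyset\}$, so $I_{\mathcal{A}}(z)=I(\mathcal{A})$, and (3) gives $I(\mathcal{A})={\uparrow}[\lsim_{\mathcal{A}},z]$; by transitivity of $\lsim_{\mathcal{A}}$, every set of the form ${\uparrow}[\lsim_{\mathcal{A}},z]$ is $\lsim_{\mathcal{A}}$-increasing and hence lies in $\tau[\lsim_{\mathcal{A}}]$. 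The boundary case $I(\mathcal{A})=\emptyset$ is handled vacuously, since $\emptyset\in\tau[\lsim_{\mathcal{A}}]$ and no $z\in I(\mathcal{A})$ exists.

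Finally, for part~(5), I would take any $U\in\mu[\mathcal{A}]$ and verify directly that it is $\lsim_{\mathcal{A}}$-increasing: for $x\in U$ choose $A\in\mathcal{A}$ with $x\in A\subseteq U$; if $x\lsim_{\mathcal{A}}y$, then $A\in\mathcal{B}_{\mathcal{A}}(x)\subseteq\mathcal{B}_{\mathcal{A}}(y)$ by~(1), so $y\in A\subseteq U$. There is no real obstacle here; the only point demanding slight care is keeping track of the empty set (distinguishing the cases $\mathcal{B}_{\mathcal{A}}(x)=\emptyset$ and $I(\mathcal{A})=\emptyset$) so that every assertion is formulated and verified on the correct domain, in particular in part~(4) where $I(\mathcal{A})$ could conceivably be empty even though $\mathcal{A}\setminus\{\emptyset\}$ is not.
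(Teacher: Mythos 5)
Your proposal is correct and follows essentially the same route as the paper: items (1)--(4) are exactly the definition-unwinding the paper omits as elementary, and your direct verification in (5) that each $U\in\mu[\mathcal{A}]$ is $\lsim_{\mathcal{A}}$-increasing is just an inlined version of the paper's argument, which shows $\mathcal{A}\subseteq\tau[\lsim_{\mathcal{A}}]$ via $\uparrow[\lsim_{\mathcal{A}},z]\subseteq A$ and then invokes closure of $\tau[\lsim_{\mathcal{A}}]$ under unions. Your attention to the cases $\mathcal{B}_{\mathcal{A}}(x)=\emptyset$ and $I(\mathcal{A})=\emptyset$ is appropriate and consistent with the standing assumptions of Section~\ref{s3}.
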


\begin{proof}
We omit the elementary, simple proofs of (1)--(4). Since $\tau[\lsim_{\mathcal{A}}]$ is a topology, for the proof of (5), it suffices to show that $\mathcal{A}\subseteq\tau[\lsim_{\mathcal{A}}]$. Consider any non-empty set $A\in\mathcal{A}$ and any $z\in A$. It is easily seen that $\uparrow[\lsim_{\mathcal{A}}, z]\subseteq A$, so $A\in \tau[\lsim_{\mathcal{A}}]$.
\end{proof}

The following lemma is widely known and can be deduced immediately from \cite[Theorem 8.3.6]{Rich} by considering the identity function on $X$.

\begin{lemma}
\label{s3l5}
Let $\lsim$  and $\lsim^{\ast}$ be quasiorders on a set $X$. Then  $\lsim^{\ast}\subseteq \lsim$ if and only if $\tau[\lsim]\subseteq\tau[\lsim^{\ast}]$.
\end{lemma}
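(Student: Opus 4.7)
The plan is to prove the two implications separately, using the characterization of $\tau[\lsim]$ as the family of all $\lsim$-increasing sets (as recalled in Remark \ref{s2r9}).

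For the forward direction, I would assume $\lsim^{\ast}\subseteq\lsim$ and take an arbitrary $U\in\tau[\lsim]$. To show $U\in\tau[\lsim^{\ast}]$, I would pick $x\in U$ and $y\in X$ with $x\lsim^{\ast} y$, then use the inclusion of relations to conclude $x\lsim y$, which forces $y\in U$ because $U$ is $\lsim$-increasing. Hence $U$ is $\lsim^{\ast}$-increasing and the inclusion $\tau[\lsim]\subseteq\tau[\lsim^{\ast}]$ follows.

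The reverse direction is the part worth planning more carefully, although it is still short. Assuming $\tau[\lsim]\subseteq\tau[\lsim^{\ast}]$, I want to show that whenever $x\lsim^{\ast} y$ we also have $x\lsim y$. The key trick is to consider the principal up-set $\uparrow[\lsim,x]$. By reflexivity of $\lsim$, $x\in\uparrow[\lsim,x]$, and by transitivity of $\lsim$, the set $\uparrow[\lsim,x]$ is $\lsim$-increasing, so it lies in $\tau[\lsim]$. The hypothesis then places it in $\tau[\lsim^{\ast}]$, so it is $\lsim^{\ast}$-increasing. Combined with $x\in\uparrow[\lsim,x]$ and $x\lsim^{\ast} y$, this gives $y\in\uparrow[\lsim,x]$, that is, $x\lsim y$, as required.

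There is no real obstacle: the only subtlety is recognising the right test set (the principal up-set $\uparrow[\lsim,x]$), which is essentially the minimal member of $\tau[\lsim]$ containing $x$ and exactly the gadget needed to translate membership in $\tau[\lsim^{\ast}]$ back into a statement about the relation $\lsim^{\ast}$. Everything else is an unpacking of definitions, so I would not dwell on routine verifications in the write-up.
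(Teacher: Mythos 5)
Your proof is correct: both implications are verified exactly as needed, and the key step in the reverse direction -- testing the hypothesis $\tau[\lsim]\subseteq\tau[\lsim^{\ast}]$ against the principal up-set $\uparrow[\lsim,x]$, which is the smallest $\lsim$-increasing set containing $x$ -- is the right gadget and is justified properly (reflexivity gives $x\in\uparrow[\lsim,x]$, transitivity gives that it is $\lsim$-increasing). The paper does not write out this argument at all: it disposes of the lemma in one line by citing Theorem 8.3.6 of \cite{Rich}, the general statement that a map between quasiordered sets is increasing if and only if it is continuous for the specialization topologies, applied to the identity function on $X$. Your write-up is in effect the unpacked content of that citation, so the two routes are mathematically the same argument in different packaging: the paper's version buys brevity at the cost of an external reference, while yours is self-contained and makes visible the one genuinely non-routine idea (the choice of $\uparrow[\lsim,x]$ as the test set). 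Either is acceptable; nothing is missing from yours.
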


\begin{theorem}
\label{s3t6} 
\begin{enumerate}
\item $\tau[\lsim_{\mathcal{A}}]=\tau[\lsim_{\mu[\mathcal{A}]}]\subseteq\tau[\lsim_{\tilde{\mu}[\mathcal{A}]}]$.
\item The equality $\tau[\lsim_{\mathcal{A}}]=\tau[\lsim_{\tilde{\mu}[\mathcal{A}]}]$ holds if and only if $\lsim_{\mathcal{A}}\subseteq\lsim_{\tilde{\mu}[\mathcal{A}]}$.
\end{enumerate}
\end{theorem}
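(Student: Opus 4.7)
The plan is to derive both parts directly from Proposition~\ref{s3p4}(2) and Lemma~\ref{s3l5}. The crucial observations already at hand are the inclusion $\lsim_{\tilde{\mu}[\mathcal{A}]}\subseteq\lsim_{\mathcal{A}}$ and the equality $\lsim_{\mathcal{A}}=\lsim_{\mu[\mathcal{A}]}$, together with the contravariant duality between refinement of quasiorders and refinement of their specialization topologies.

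For part (1), first I would invoke the equality $\lsim_{\mathcal{A}}=\lsim_{\mu[\mathcal{A}]}$ from Proposition~\ref{s3p4}(2); since the two quasiorders coincide as relations, their specialization topologies coincide, giving $\tau[\lsim_{\mathcal{A}}]=\tau[\lsim_{\mu[\mathcal{A}]}]$. Then, using the inclusion $\lsim_{\tilde{\mu}[\mathcal{A}]}\subseteq\lsim_{\mathcal{A}}$ from the same proposition together with Lemma~\ref{s3l5}, I obtain $\tau[\lsim_{\mathcal{A}}]\subseteq\tau[\lsim_{\tilde{\mu}[\mathcal{A}]}]$.

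For part (2), I would apply Lemma~\ref{s3l5} in both directions. Suppose first that $\tau[\lsim_{\mathcal{A}}]=\tau[\lsim_{\tilde{\mu}[\mathcal{A}]}]$. Then in particular $\tau[\lsim_{\tilde{\mu}[\mathcal{A}]}]\subseteq\tau[\lsim_{\mathcal{A}}]$, and Lemma~\ref{s3l5} converts this into $\lsim_{\mathcal{A}}\subseteq\lsim_{\tilde{\mu}[\mathcal{A}]}$. Conversely, if $\lsim_{\mathcal{A}}\subseteq\lsim_{\tilde{\mu}[\mathcal{A}]}$, then combined with the reverse containment $\lsim_{\tilde{\mu}[\mathcal{A}]}\subseteq\lsim_{\mathcal{A}}$ from Proposition~\ref{s3p4}(2) we get equality of the two quasiorders, and hence equality of their specialization topologies.

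There is no real obstacle; the theorem is essentially a packaging of the monotonicity lemma applied to the inclusion chain already established in Proposition~\ref{s3p4}. The only point that requires a little care is to make sure that the inclusion in part~(1) is written in the correct direction (the topology containing $\tilde{\mu}[\mathcal{A}]$-increasing sets is \emph{larger}, not smaller, because $\lsim_{\tilde{\mu}[\mathcal{A}]}$ is the \emph{smaller} relation), so that the contravariance in Lemma~\ref{s3l5} is applied correctly.
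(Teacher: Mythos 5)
Your proposal is correct and is exactly the paper's argument: the paper proves Theorem~\ref{s3t6} by citing Proposition~\ref{s3p4}(2) and Lemma~\ref{s3l5}, and your write-up simply spells out the same deduction, with the contravariance of Lemma~\ref{s3l5} applied in the correct direction in both parts.
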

\begin{proof}
This follows directly from Proposition \ref{s3p4}(2) and Lemma \ref{s3l5}.
\end{proof}

Now, we are in a position to state the main results of this section.

\begin{theorem} 
\label{s3t7}
The following conditions are all equivalent:
\begin{enumerate}
\item[(i)] $\bigcup\mathcal{A}=X$ and, for every $x\in X$, $I_{\mathcal{A}}(x)\in\mu[\mathcal{A}]$;
\item[(ii)] $\tau[\lsim_{\mathcal{A}}]=\mu[\mathcal{A}]$;
\item[(iii)] $\mu[\mathcal{A}]$ is an Alexandroff topology on $X$. 
\end{enumerate}
\end{theorem}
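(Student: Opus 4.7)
My plan is to prove the equivalences cyclically, $(i) \Rightarrow (ii) \Rightarrow (iii) \Rightarrow (i)$, using Proposition~\ref{s3p4} throughout. The key point to keep in mind is that by Proposition~\ref{s3p4}(5) the inclusion $\mu[\mathcal{A}]\subseteq\tau[\lsim_{\mathcal{A}}]$ is automatic, so the real content of (ii) is the reverse inclusion; moreover, $X\in\mu[\mathcal{A}]$ is equivalent to $\bigcup\mathcal{A}=X$, since $\mu[\mathcal{A}]$-openness of $X$ means every point lies in some $A\in\mathcal{A}$.

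For $(i)\Rightarrow(ii)$, I would take any $U\in\tau[\lsim_{\mathcal{A}}]$ and write it as a union of $\lsim_{\mathcal{A}}$-upper sets of its points. Since $\bigcup\mathcal{A}=X$, every $x\in X$ has $\mathcal{B}_{\mathcal{A}}(x)\neq\emptyset$, so by Proposition~\ref{s3p4}(3) we have $\uparrow[\lsim_{\mathcal{A}},x]=I_{\mathcal{A}}(x)$. Because $U$ is $\lsim_{\mathcal{A}}$-increasing, $U=\bigcup_{x\in U}\uparrow[\lsim_{\mathcal{A}},x]=\bigcup_{x\in U}I_{\mathcal{A}}(x)$; each $I_{\mathcal{A}}(x)$ lies in $\mu[\mathcal{A}]$ by hypothesis, and $\mu[\mathcal{A}]$ is closed under arbitrary unions, so $U\in\mu[\mathcal{A}]$.

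For $(ii)\Rightarrow(iii)$, observe that $\tau[\lsim_{\mathcal{A}}]$ is, by construction, the specialization topology of the quasiorder $\lsim_{\mathcal{A}}$, hence a topology on $X$; transferring this through (ii) shows that $\mu[\mathcal{A}]$ is a topology on $X$. Since $\mu[\mathcal{A}]=\tau[\lsim_{\mathcal{A}}]$ exhibits $\mu[\mathcal{A}]$ as the specialization topology of a quasiorder, Theorem~\ref{s1t5} immediately yields that $\mu[\mathcal{A}]$ is an Alexandroff topology.

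For $(iii)\Rightarrow(i)$, note that as $\mu[\mathcal{A}]$ is a topology on $X$, we have $X\in\mu[\mathcal{A}]$, which as remarked above gives $\bigcup\mathcal{A}=X$; in particular $\mathcal{B}_{\mathcal{A}}(x)\neq\emptyset$ for every $x\in X$. Every member of $\mathcal{B}_{\mathcal{A}}(x)$ belongs to $\mathcal{A}\subseteq\mu[\mathcal{A}]$, and the Alexandroff property permits arbitrary intersections of open sets, hence $I_{\mathcal{A}}(x)=\bigcap\mathcal{B}_{\mathcal{A}}(x)\in\mu[\mathcal{A}]$, completing the cycle. I do not anticipate a serious obstacle here: the only subtle point is ensuring the identification $\uparrow[\lsim_{\mathcal{A}},x]=I_{\mathcal{A}}(x)$ in step $(i)\Rightarrow(ii)$ is justified by $\bigcup\mathcal{A}=X$, and that $X\in\mu[\mathcal{A}]$ is extracted from (ii) and (iii) to recover this global covering condition.
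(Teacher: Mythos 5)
Your proof is correct and uses essentially the same ingredients as the paper's: Proposition~\ref{s3p4}(3) to identify $\uparrow[\lsim_{\mathcal{A}},x]$ with $I_{\mathcal{A}}(x)$, Proposition~\ref{s3p4}(5) for the automatic inclusion $\mu[\mathcal{A}]\subseteq\tau[\lsim_{\mathcal{A}}]$, the fact that specialization topologies are Alexandroff, and closure of Alexandroff topologies under arbitrary intersections of open sets. The only difference is organizational (you argue the cycle $(i)\Rightarrow(ii)\Rightarrow(iii)\Rightarrow(i)$ where the paper proves $(i)\Leftrightarrow(ii)$ directly and then appends the other two implications), and you spell out details the paper leaves implicit.
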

\begin{proof}
Of course, if $\mu[\mathcal{A}]$ is a topology on $X$, then $\bigcup\mathcal{A}=X$. Thus, it follows from items (3) and (5) of Proposition \ref{s3p4} that conditions (i) and (ii) are equivalent. Since $\tau[\lsim_{\mathcal{A}}]$ is an Alexandroff topology, (ii) implies (iii).
It is obvious that (iii) implies (i).
\end{proof}

\begin{corollary}
\label{s3c8}
If the family $\mathcal{A}$ is finite, then $\tau[\lsim_{\mathcal{A}}]=\mu[\mathcal{A}]$ if and only if $\mu[\mathcal{A}]$ is a topology on $X$.
\end{corollary}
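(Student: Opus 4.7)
The plan is to deduce this quickly from Theorem \ref{s3t7}. The forward implication requires no finiteness assumption on $\mathcal{A}$: whenever $\tau[\lsim_{\mathcal{A}}]=\mu[\mathcal{A}]$, the family $\mu[\mathcal{A}]$ equals a specialization topology and is therefore automatically a topology on $X$.

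For the converse, I would show that the finiteness of $\mathcal{A}$ upgrades the hypothesis ``$\mu[\mathcal{A}]$ is a topology'' into condition (i) of Theorem \ref{s3t7}, namely $\bigcup\mathcal{A}=X$ together with $I_{\mathcal{A}}(x)\in\mu[\mathcal{A}]$ for every $x\in X$. First, since $\mu[\mathcal{A}]$ is a topology, $X\in\mu[\mathcal{A}]$, and the defining condition of $\mu[\mathcal{A}]$ then forces every $x\in X$ to belong to some $A\in\mathcal{A}$; thus $\bigcup\mathcal{A}=X$ and $\mathcal{B}_{\mathcal{A}}(x)\neq\emptyset$ for each $x\in X$. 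Second, directly from Definition \ref{s1d3}(i) one has $\mathcal{A}\subseteq\mu[\mathcal{A}]$, so $\mathcal{B}_{\mathcal{A}}(x)$ is a finite subfamily of $\mu[\mathcal{A}]$; since any topology is closed under finite intersections, $I_{\mathcal{A}}(x)=\bigcap\mathcal{B}_{\mathcal{A}}(x)\in\mu[\mathcal{A}]$. Applying Theorem \ref{s3t7} then yields $\tau[\lsim_{\mathcal{A}}]=\mu[\mathcal{A}]$.

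There is no genuine obstacle here; the corollary is essentially a sanity check on the role of finite intersections. The assumption that $\mathcal{A}$ is finite plays exactly the role of bridging the gap between the weaker closure-under-finite-intersections property enjoyed by every topology and the arbitrary intersection $I_{\mathcal{A}}(x)=\bigcap\mathcal{B}_{\mathcal{A}}(x)$ appearing in Theorem \ref{s3t7}(i). One could alternatively observe that finiteness of $\mathcal{A}$ makes $\mu[\mathcal{A}]$ itself finite, so ``topology'' and ``Alexandroff topology'' coincide for $\mu[\mathcal{A}]$, and then invoke the equivalence (ii) $\Leftrightarrow$ (iii) of Theorem \ref{s3t7}; but the direct route through condition (i) seems most economical.
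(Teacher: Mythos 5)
Your proof is correct and is essentially the intended derivation: the paper states Corollary \ref{s3c8} without proof as an immediate consequence of Theorem \ref{s3t7}, and your argument (using finiteness of $\mathcal{B}_{\mathcal{A}}(x)$ together with closure of a topology under finite intersections to verify condition (i), or equivalently noting that $\mu[\mathcal{A}]$ is finite and hence Alexandroff) is exactly the bridge the authors leave to the reader.
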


\begin{theorem}
\label{s3t9}
If $I(\mathcal{A})=\emptyset$, then the topology $\tau[\lsim_{\tilde{\mu}[\mathcal{A}]}]$ is discrete.
\end{theorem}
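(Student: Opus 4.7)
The plan is to show directly that the topology $\tau[\lsim_{\tilde{\mu}[\mathcal{A}]}]$ equals the full power set $\mathcal{P}(X)$. By Remark \ref{s2r9} this topology consists of all $\lsim_{\tilde{\mu}[\mathcal{A}]}$-increasing sets, and since increasing sets are closed under unions, it is enough to verify that each singleton $\{x\}\subseteq X$ is $\lsim_{\tilde{\mu}[\mathcal{A}]}$-increasing, or equivalently that $\uparrow[\lsim_{\tilde{\mu}[\mathcal{A}]},x]=\{x\}$ for every $x\in X$.

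To do this, I would fix $x\in X$ and an arbitrary $y\in X\setminus\{x\}$, and exhibit a set $U\in\tilde{\mu}[\mathcal{A}]$ with $x\in U$ and $y\notin U$; by Proposition \ref{s3p4}(1) this witnesses $x\not\lsim_{\tilde{\mu}[\mathcal{A}]}y$. The hypothesis $I(\mathcal{A})=\bigcap(\mathcal{A}\setminus\{\emptyset\})=\emptyset$ says precisely that no point of $X$ lies in every non-empty member of $\mathcal{A}$, so there exists $A\in\mathcal{A}\setminus\{\emptyset\}$ with $y\notin A$. The natural candidate is then
\[
U:=A\cup\{x\}.
\]
Since $A$ is a non-empty member of $\mathcal{A}$ contained in $U$, Proposition \ref{s3p3}(1) gives $U\in\tilde{\mu}[\mathcal{A}]$. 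By construction $x\in U$, and from $y\notin A$ together with $y\neq x$ we obtain $y\notin U$. Since $y$ was arbitrary, $\uparrow[\lsim_{\tilde{\mu}[\mathcal{A}]},x]=\{x\}$, and hence $\tau[\lsim_{\tilde{\mu}[\mathcal{A}]}]=\mathcal{P}(X)$.

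There is essentially no obstacle: the whole argument rests on the single observation that adjoining a point to a non-empty member of $\mathcal{A}$ still produces an element of $\tilde{\mu}[\mathcal{A}]$. This is precisely the feature that distinguishes $\tilde{\mu}[\mathcal{A}]$ from $\mu[\mathcal{A}]$, and it explains why the analogous conclusion for $\tau[\lsim_{\mathcal{A}}]$ cannot be obtained by the same trick: the set $A\cup\{x\}$ need not belong to $\mu[\mathcal{A}]$. Worth noting that no choice principle is invoked, since for each pair $(x,y)$ we need only the \emph{existence} of one suitable $A$, not a simultaneous selection.
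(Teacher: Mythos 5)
Your proof is correct and follows essentially the same route as the paper's: both arguments show $\uparrow[\lsim_{\tilde{\mu}[\mathcal{A}]},x]=\{x\}$ by using $I(\mathcal{A})=\emptyset$ to pick a non-empty $A\in\mathcal{A}$ missing the candidate point and then passing to the witness $A\cup\{x\}\in\tilde{\mu}[\mathcal{A}]$. The only difference is cosmetic: the paper phrases the verification as a proof by contradiction, while you argue directly.
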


\begin{proof}
Let $x\in X$ and $V=~\uparrow[\lsim_{\tilde{\mu}[\mathcal{A}]}, x]$. Clearly, $V\in\tau[\lsim_{\tilde{\mu}[\mathcal{A}]}]$ and $x\in V$. Suppose that $t\in V\setminus\{x\}$. Assuming that $I(\mathcal{A})=\emptyset$, we can fix $U\in \mathcal{A}\setminus\{\emptyset\}$ such that $t\notin U$. Then $W=U\cup\{x\}\in\tilde{\mu}[\mathcal{A}]$, $x\in W$ but $t\notin W$, which contradicts the fact that $x\lsim_{\tilde{\mu}[\mathcal{A}]} t$. This contradiction shows that $V=\{x\}$. Hence $\{x\}\in \tau[\lsim_{\tilde{\mu}[\mathcal{A}]}]$ and, in consequence, the topology $\tau[\lsim_{\tilde{\mu}[\mathcal{A}]}]$ is discrete.
\end{proof}

For $x,y\in X$ such that $x\lsim_{\mathcal{A}} y$ and $y\lsim_{\mathcal{A}}x$, we write $x\approx_{\mathcal{A}}y$.

\begin{theorem}
\label{s3t10} 
Suppose that $I(\mathcal{A})\neq\emptyset$. Then $I(\mathcal{A})$ is the set of all weakly $\lsim_{\mathcal{A}}$-maximal elements and the following conditions are all equivalent:
\begin{enumerate}
\item[(i)] $\tau[\lsim_{\tilde{\mu}[\mathcal{A}]}]=\tilde{\mu}[\mathcal{A}]$;
\item[(ii)]  $I(\mathcal{A})\in\mu[\mathcal{A}]$;
\item[(iii)] $\tilde{\mu}[\mathcal{A}]=\super(I(\mathcal{A}))$;
\item[(iv)] $\tilde{\mu}[\mathcal{A}]$ is an Alexandroff topology.
\end{enumerate}
Furthermore, the set $I(\mathcal{A})$ is dense in $\tau[\lsim_{\tilde{\mu}[\mathcal{A}]}]$.
\end{theorem}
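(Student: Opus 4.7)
My plan is to prove the four parts of the theorem in this order: the weak-maximality characterization of $I(\mathcal{A})$, the cycle of equivalences (ii) $\Rightarrow$ (iii) $\Rightarrow$ (iv) $\Rightarrow$ (ii) supplemented by (i) $\Leftrightarrow$ (iv) from Theorem \ref{s1t5}, and finally the density of $I(\mathcal{A})$ in $\tau[\lsim_{\tilde{\mu}[\mathcal{A}]}]$. The backbone throughout is Proposition \ref{s3p4}(1), which translates $\lsim_{\mathcal{A}}$-comparisons into inclusions of the families $\mathcal{B}_{\mathcal{A}}(\cdot)$.

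For the weak-maximality characterization I would use that $a \in I(\mathcal{A})$ is equivalent to $\mathcal{B}_{\mathcal{A}}(a) = \mathcal{A}\setminus\{\emptyset\}$ (the largest possible value). If $a \in I(\mathcal{A})$ and $a \lsim_{\mathcal{A}} x$, then $\mathcal{B}_{\mathcal{A}}(x) = \mathcal{A}\setminus\{\emptyset\} = \mathcal{B}_{\mathcal{A}}(a)$, so $x \approx_{\mathcal{A}} a$, giving weak $\lsim_{\mathcal{A}}$-maximality. Conversely, if $a$ is weakly $\lsim_{\mathcal{A}}$-maximal, fix any $z \in I(\mathcal{A})$ (which is non-empty by assumption); then $\mathcal{B}_{\mathcal{A}}(a) \subseteq \mathcal{A}\setminus\{\emptyset\} = \mathcal{B}_{\mathcal{A}}(z)$, so $a \lsim_{\mathcal{A}} z$. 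Weak maximality forces $z \approx_{\mathcal{A}} a$, whence $\mathcal{B}_{\mathcal{A}}(a) = \mathcal{A}\setminus\{\emptyset\}$, i.e., $a \in I(\mathcal{A})$.

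For the equivalences, (i) $\Leftrightarrow$ (iv) is immediate from Theorem \ref{s1t5} applied to the topology $\tilde{\mu}[\mathcal{A}]$. The sharpest move is (ii) $\Rightarrow$ (iii): if $I(\mathcal{A}) \in \mu[\mathcal{A}]$, pick $x \in I(\mathcal{A})$ and choose $A \in \mathcal{A}$ with $x \in A \subseteq I(\mathcal{A})$; since $A$ is non-empty one has $I(\mathcal{A}) \subseteq A$, and so $A = I(\mathcal{A})$, i.e., $I(\mathcal{A}) \in \mathcal{A}$. Then, using the description of $\tilde{\mu}[\mathcal{A}]$ in Proposition \ref{s3p3}(1), a direct comparison shows $\tilde{\mu}[\mathcal{A}] = \super(I(\mathcal{A}))$. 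The implication (iii) $\Rightarrow$ (iv) follows from the remark in Section \ref{s2} that topologies of supersets are Alexandroff. For (iv) $\Rightarrow$ (ii), Theorem \ref{s1t5} gives $\tilde{\mu}[\mathcal{A}] = \tau[\lsim_{\tilde{\mu}[\mathcal{A}]}]$, Proposition \ref{s3p4}(4) gives $I(\mathcal{A}) \in \tau[\lsim_{\mathcal{A}}]$, Theorem \ref{s3t6}(1) gives $\tau[\lsim_{\mathcal{A}}] \subseteq \tau[\lsim_{\tilde{\mu}[\mathcal{A}]}]$, so $I(\mathcal{A}) \in \tilde{\mu}[\mathcal{A}]$, and Proposition \ref{s3p3}(3) returns $I(\mathcal{A}) \in \mu[\mathcal{A}]$.

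Finally, for the density of $I(\mathcal{A})$ in $\tau[\lsim_{\tilde{\mu}[\mathcal{A}]}]$, I would take any non-empty $V \in \tau[\lsim_{\tilde{\mu}[\mathcal{A}]}]$, pick $x \in V$, and show $x \lsim_{\tilde{\mu}[\mathcal{A}]} z$ for every $z \in I(\mathcal{A})$: any $U \in \tilde{\mu}[\mathcal{A}]$ with $x \in U$ contains some non-empty $A \in \mathcal{A}$ with $A \subseteq U$, and then $z \in I(\mathcal{A}) \subseteq A \subseteq U$, so $z \in U$. Since $V$ is $\lsim_{\tilde{\mu}[\mathcal{A}]}$-increasing, $z \in V$, hence $V \cap I(\mathcal{A}) \neq \emptyset$. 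The only place where some care is needed is the step that $\mu[\mathcal{A}]$-openness of $I(\mathcal{A})$ collapses to membership of $I(\mathcal{A})$ in $\mathcal{A}$; once this is observed, all remaining steps are routine applications of the earlier propositions and Theorem \ref{s1t5}.
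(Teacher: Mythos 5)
Your proof is correct, and it uses the same toolbox as the paper (Propositions \ref{s3p3} and \ref{s3p4}, Theorem \ref{s3t6}), but the wiring is different. The paper proves (i)$\Rightarrow$(ii)$\Rightarrow$(iii)$\Rightarrow$(i) and then attaches (iv) via (iii)$\Rightarrow$(iv)$\Rightarrow$(ii); the key step (iii)$\Rightarrow$(i) is done by explicitly computing $\uparrow[\lsim_{\tilde{\mu}[\mathcal{A}]},x]=I(\mathcal{A})\cup\{x\}$. You instead run the cycle (ii)$\Rightarrow$(iii)$\Rightarrow$(iv)$\Rightarrow$(ii) and hang (i) off (iv) via Theorem \ref{s1t5}, which lets you replace that up-set computation by the general facts that superset topologies are Alexandroff and that Alexandroff topologies coincide with their specialization topologies. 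Your reduction of (ii) to $I(\mathcal{A})\in\mathcal{A}$ is a nice explicit intermediate step the paper leaves implicit, and your density argument is a genuine improvement in clarity: you prove directly and unconditionally that $I(\mathcal{A})\subseteq\;\uparrow[\lsim_{\tilde{\mu}[\mathcal{A}]},x]$ for every $x$, whereas the paper's one-line appeal to ``the equivalence of (i) and (iii)'' is cryptic (the density claim must hold even when (i)--(iv) all fail). One small point of wording to tighten: you cannot literally ``apply Theorem \ref{s1t5} to the topology $\tilde{\mu}[\mathcal{A}]$'' for (i)$\Rightarrow$(iv), since $\tilde{\mu}[\mathcal{A}]$ is not known to be a topology at that stage (Example \ref{s3e1} shows it can fail to be one even with $I(\mathcal{A})\neq\emptyset$); the correct justification is simply that (i) exhibits $\tilde{\mu}[\mathcal{A}]$ as a specialization topology $\tau[\lsim]$, and every such topology is Alexandroff. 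The converse direction (iv)$\Rightarrow$(i) does legitimately invoke Theorem \ref{s1t5}, since (iv) supplies the hypothesis that $\tilde{\mu}[\mathcal{A}]$ is a topology.
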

\begin{proof}
It is obvious that every element of $I(\mathcal{A})$ is weakly $\lsim_{\mathcal{A}}$-maximal. Furthermore, we can fix $x_0\in I(\mathcal{A})$. We notice that, for every $x\in X$, we have $x\lsim_{\mathcal{A}}x_0$. Hence, if $m\in X$ is a weakly $\lsim_{\mathcal{A}}$-maximal element, then $m\approx_{\mathcal{A}} x_0$, so $m\in I(\mathcal{A})$ because for the element $x_0$ we have both $x_0\lsim_{\mathcal{A}}m$ and $x_0\in I(\mathcal{A})$. This shows that $I(\mathcal{A})$ is the set of all weakly $\lsim_{\mathcal{A}}$-maximal elements. 

Now, let us assume that (i) holds. By Proposition \ref{s3p4}(4), $I(\tilde{\mu}[\mathcal{A}])\in\tau[\lsim_{\tilde{\mu}[\mathcal{A}]}]$.  Since $I(\mathcal{A})=I(\tilde{\mu}[\mathcal{A}])$ by Proposition \ref{s3p3}(2), we have $I(\mathcal{A})\in \tau[\lsim_{\tilde{\mu}[\mathcal{A}]}]$. It follows from (i) that $I(\mathcal{A})\in\tilde{\mu}[\mathcal{A}]$. By Proposition \ref{s3p3}(3), $I(\mathcal{A})\in\mu[\mathcal{A}]$. Hence (i) implies (ii). 

Since $I(\mathcal{A})\neq\emptyset$, it is obvious that (ii) implies (iii).

Suppose that (iii) holds. Let $\emptyset\neq U\in\tau[\lsim_{\tilde{\mu}[\mathcal{A}]}]$ and let $x\in U$. Put $V_x=I(\mathcal{A})\cup\{x\}$. It follows from (iii) that $V_x\in\tilde{\mu}[\mathcal{A}]$. One can easily check that $\uparrow[\lsim_{\tilde{\mu}[\mathcal{A}]},x]=V_x$. Hence $V_x\subseteq U$ because $x\in U\in \tau[\lsim_{\tilde{\mu}[\mathcal{A}]}]$. Since $V_x\in\tilde{\mu}[\mathcal{A}]$,  we deduce that $U\in\tilde{\mu}[\mathcal{A}]$.  Therefore, $\tau[\lsim_{\tilde{\mu}[\mathcal{A}]}]\subseteq\tilde{\mu}[\mathcal{A}]$.  Proposition \ref{s3p4}(5) completes the proof that (iii) implies (i). In this way, we have shown that conditions (i)--(iii) are all equivalent. Of course, (iii) implies (iv). Assuming that (iv) holds, we obtain that $I(\mathcal{A})\in\tilde{\mu}[\mathcal{A}]$. Thus, we infer from Proposition \ref{s3p3}(3) that $I(\mathcal{A})\in\mu[\mathcal{A}]$. Hence (iv) implies (ii) and, in consequence, conditions (i)--(iv) are all equivalent.

That $I(\mathcal{A})$ is dense in $\tau[\lsim_{\tilde{\mu}[\mathcal{A}]}]$ follows from the equivalence of (i) and (iii).
\end{proof}

\begin{corollary}
\label{s3c11} 
If $I(\mathcal{A})\neq\emptyset$ and $\tau[\lsim_{\mathcal{A}}]=\mu[\mathcal{A}]$, then $\tau[\lsim_{\tilde{\mu}[\mathcal{A}]}]=\tilde{\mu}[\mathcal{A}]$.
\end{corollary}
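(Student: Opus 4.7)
The plan is to reduce the conclusion to condition (ii) of Theorem \ref{s3t10} and then feed in the hypotheses. Because $I(\mathcal{A})\neq\emptyset$, Theorem \ref{s3t10} is applicable and tells us that the desired equality $\tau[\lsim_{\tilde{\mu}[\mathcal{A}]}]=\tilde{\mu}[\mathcal{A}]$ is equivalent to the membership $I(\mathcal{A})\in\mu[\mathcal{A}]$. So the whole task collapses to verifying this single membership.

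To produce that membership, I would first invoke Proposition \ref{s3p4}(4): picking any $z\in I(\mathcal{A})$ (available because $I(\mathcal{A})\neq\emptyset$), we get $I(\mathcal{A})=\uparrow[\lsim_{\mathcal{A}},z]\in\tau[\lsim_{\mathcal{A}}]$. Now the second hypothesis $\tau[\lsim_{\mathcal{A}}]=\mu[\mathcal{A}]$ immediately upgrades this to $I(\mathcal{A})\in\mu[\mathcal{A}]$, which is exactly condition (ii) of Theorem \ref{s3t10}.

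Finally, the equivalence (ii)$\Leftrightarrow$(i) in Theorem \ref{s3t10} yields $\tau[\lsim_{\tilde{\mu}[\mathcal{A}]}]=\tilde{\mu}[\mathcal{A}]$. There is essentially no obstacle here: the corollary is a direct chaining of Proposition \ref{s3p4}(4) with Theorem \ref{s3t10}, and the only thing one has to notice is that the nonemptiness of $I(\mathcal{A})$ is used twice, once to guarantee an element $z$ witnessing $I(\mathcal{A})=\uparrow[\lsim_{\mathcal{A}},z]$ and once to satisfy the standing hypothesis of Theorem \ref{s3t10}.
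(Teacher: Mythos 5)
Your proof is correct. The overall strategy coincides with the paper's: both arguments reduce the corollary to establishing $I(\mathcal{A})\in\mu[\mathcal{A}]$ and then invoke the equivalence of conditions (i) and (ii) in Theorem \ref{s3t10}. The one point of divergence is how that membership is obtained: the paper deduces from $\tau[\lsim_{\mathcal{A}}]=\mu[\mathcal{A}]$ and Theorem \ref{s3t7} that $\mu[\mathcal{A}]$ is an Alexandroff topology, hence closed under the intersection $I(\mathcal{A})=\bigcap(\mathcal{A}\setminus\{\emptyset\})$; you instead observe via Proposition \ref{s3p4}(4) that $I(\mathcal{A})=\uparrow[\lsim_{\mathcal{A}},z]$ for any $z\in I(\mathcal{A})$ lies in $\tau[\lsim_{\mathcal{A}}]$ and then apply the hypothesized equality directly. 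Your route is marginally more economical, since it bypasses the Alexandroff characterization entirely and uses only a fact already recorded about the specialization topology; the paper's route has the small expository advantage of making visible that the hypothesis forces $\mu[\mathcal{A}]$ to be an Alexandroff topology. Both are complete and both uses of the hypothesis $I(\mathcal{A})\neq\emptyset$ that you flag are indeed needed.
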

\begin{proof}
Assuming that $I(\mathcal{A})\neq\emptyset$ and $\tau[\lsim_{\mathcal{A}}]=\mu[\mathcal{A}]$, we deduce from Theorem \ref{s3t7} that $\mu[\mathcal{A}]$ is an Alexandroff topology. This implies that $I(\mathcal{A})\in\mu[\mathcal{A}]$. Theorem \ref{s3t10} completes the proof.
\end{proof}

The following example shows that the equality $\tau[\lsim_{\tilde{\mu}[\mathcal{A}]}]=\tilde{\mu}[\mathcal{A}]$ need not imply $\tau[\lsim_{\mathcal{A}}]=\mu[\mathcal{A}]$.

\begin{example}
\label{s3e12}
Suppose that $X$ is a set with at least two elements. Let $x_0$ be a fixed element of $X$ and let $\mathcal{A}=\{\{x_0\}\}$. Then $\mu[\mathcal{A}]=\{\emptyset, \{x_0\}\}$ is not a topology on $X$, so $\tau[\lsim_{\mathcal{A}}]\neq\mu[\mathcal{A}]$. One can also notice that $\lsim_{\mathcal{A}}=\{\langle x_0, x_0\rangle\}\cup((X\setminus\{x_0\})\times X)$, so $\tau[\lsim_{\mathcal{A}}]=\{\emptyset, X, \{x_0\}\}$. We have $I(\mathcal{A})=\{x_0\}$ and   $\tilde{\mu}[\mathcal{A}]=\super(I(\mathcal{A}))$. By Theorem \ref{s3t10}, $\tau[\lsim_{\tilde{\mu}[\mathcal{A}]}]=\tilde{\mu}[\mathcal{A}]$. That $\tau[\lsim_{\tilde{\mu}[\mathcal{A}]}]=\tilde{\mu}[\mathcal{A}]$ can be also deduced from the following equality:  $\lsim_{\tilde{\mu}[\mathcal{A}]}=\{\langle x, y\rangle\in X\times X: y\in\{x_0, x\}\}$.
\end{example}

\begin{theorem}
\label{s3t13}
Suppose that $I(\mathcal{A})\neq\emptyset$ and $\tau[\lsim_{\mathcal{A}}]=\tilde{\mu}[\mathcal{A}]$. Then $\lsim_{\mathcal{A}}=\lsim_{\tilde{\mu}[\mathcal{A}]}$.
\end{theorem}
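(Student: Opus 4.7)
The plan is to reduce the claim to the equality of the associated specialization topologies, which then collapses to an already-established chain of inclusions, so no fresh combinatorics is needed.

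First, I would note that by Proposition \ref{s3p4}(2), one inclusion $\lsim_{\tilde{\mu}[\mathcal{A}]}\subseteq\lsim_{\mathcal{A}}$ is always available. So the whole task is to prove $\lsim_{\mathcal{A}}\subseteq\lsim_{\tilde{\mu}[\mathcal{A}]}$, and by Theorem \ref{s3t6}(2) this amounts to showing that the two specialization topologies coincide: $\tau[\lsim_{\mathcal{A}}]=\tau[\lsim_{\tilde{\mu}[\mathcal{A}]}]$.

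Next, I would exploit the hypothesis $\tau[\lsim_{\mathcal{A}}]=\tilde{\mu}[\mathcal{A}]$. Since every specialization topology is an Alexandroff topology, this hypothesis says precisely that $\tilde{\mu}[\mathcal{A}]$ is an Alexandroff topology. Because $I(\mathcal{A})\neq\emptyset$, Theorem \ref{s3t10} applies: condition (iv) there holds, so condition (i) holds as well, i.e.\ $\tau[\lsim_{\tilde{\mu}[\mathcal{A}]}]=\tilde{\mu}[\mathcal{A}]$.

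Combining these two identifications with the universal inclusion $\tau[\lsim_{\mathcal{A}}]\subseteq\tau[\lsim_{\tilde{\mu}[\mathcal{A}]}]$ from Theorem \ref{s3t6}(1), I get
\[\tau[\lsim_{\mathcal{A}}]=\tilde{\mu}[\mathcal{A}]=\tau[\lsim_{\tilde{\mu}[\mathcal{A}]}],\]
which, by Theorem \ref{s3t6}(2) (or directly Lemma \ref{s3l5}), yields $\lsim_{\mathcal{A}}\subseteq\lsim_{\tilde{\mu}[\mathcal{A}]}$. Together with the reverse inclusion from Proposition \ref{s3p4}(2), this gives the desired equality $\lsim_{\mathcal{A}}=\lsim_{\tilde{\mu}[\mathcal{A}]}$.

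There is no real obstacle here: the argument is a straightforward bookkeeping exercise that chains Proposition \ref{s3p4}(2), Theorem \ref{s3t6}, and Theorem \ref{s3t10}. The only point that deserves a brief verbal remark in the written proof is the observation that the hypothesis, by virtue of $\tau[\lsim_{\mathcal{A}}]$ being automatically Alexandroff, immediately activates Theorem \ref{s3t10}; this is the nontrivial conceptual step that converts a topological equation into the quasiorder equation we want.
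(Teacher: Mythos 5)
Your proof is correct and follows essentially the same route as the paper: both arguments funnel through Theorem \ref{s3t10} to obtain $\tau[\lsim_{\tilde{\mu}[\mathcal{A}]}]=\tilde{\mu}[\mathcal{A}]$, combine this with the hypothesis $\tau[\lsim_{\mathcal{A}}]=\tilde{\mu}[\mathcal{A}]$, and conclude via Lemma \ref{s3l5}. The only (cosmetic) difference is the entry point into Theorem \ref{s3t10}: you use clause (iv), observing that the hypothesis forces $\tilde{\mu}[\mathcal{A}]$ to be an Alexandroff topology, whereas the paper uses clause (ii), first deducing $I(\mathcal{A})\in\mu[\mathcal{A}]$ from Proposition \ref{s3p4}(4) and Proposition \ref{s3p3}(3).
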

\begin{proof}
It follows from Proposition \ref{s3p4}(4) that $I(\mathcal{A})\in\tilde{\mu}[\mathcal{A}]$. By Proposition \ref{s3p3}(3), $I(\mathcal{A})\in\mu[\mathcal{A}]$. Hence, by Theorem \ref{s3t10}, $\tau[\lsim_{\tilde{\mu}[\mathcal{A}]}]=\tilde{\mu}[\mathcal{A}]$. This, together with the equality $\tau[\lsim_{\mathcal{A}}]=\tilde{\mu}[\mathcal{A}]$, implies that $\tau[\lsim_{\tilde{\mu}[\mathcal{A}]}]=\tau[\lsim_{\mathcal{A}}]$. By Lemma \ref{s3l5}, $\lsim_{\mathcal{A}}=\lsim_{\tilde{\mu}[\mathcal{A}]}$.
\end{proof}

\begin{remark}
\label{s3r14}
Considering Theorem \ref{s3t9}, it is natural to ask what happens when $X$ is a non-empty set and $\mathcal{A}=\{\emptyset\}$. In this case, $I(\mathcal{A})$ is not defined, $\mu[\mathcal{A}]=\{\emptyset\}=\tilde{\mu}[\mathcal{A}]$, $\lsim_{\tilde{\mu}[\mathcal{A}]}=X\times X$, so $\tau[\lsim_{\tilde{\mu}[\mathcal{A}]}]$ is the indiscrete topology on $X$.
\end{remark}

By replacing the quasiorders involved in our results with their duals, one can easily obtain dual versions of the results.

\section{A characterization of iso-dense spaces}
\label{s4}

The main goal of this section is to apply the results of Section \ref{s3} to our investigation of iso-dense spaces via suitable quasiorders.
We use the notation introduced in Definitions \ref{s1d1}, \ref{s1d3}, \ref{s2d1} and \ref{s3d2}.

Let us recall that, given a generalized topological space $\mathbf{X}=\langle X, \mu\rangle$, $\Iso(\mathbf{X})$ is the set of all isolated points of $\mathbf{X}$, and $\mathcal{D}(\mathbf{X})$ is the collection of all dense sets in $\mathbf{X}$. Then 
 
\[\mu[\mathcal{D}(\mathbf{X})]=\tilde{\mu}[\mathcal{D}(\mathbf{X})]=\mathcal{D}(\mathbf{X})\cup\{\emptyset\}.\]

For the collection $\mathcal{DO}(\mathbf{X})$ of all dense open sets of $\mathbf{X}$, the generalized topology $\mu[\mathcal{DO}(\mathbf{X})]=\mathcal{DO}(\mathbf{X})\cup\{\emptyset\}$ need not be a topology on $\mathbf{X}$ (see Example \ref{s2e5}). We notice that 
\[\tilde{\mu}[\mathcal{DO}(\mathbf{X})]=\{\emptyset\}\cup\{U\in\mathcal{P}(X): \inter_{\mathbf{X}}(U)\in\mathcal{D}(\mathbf{X})\}.\]

In view of the assumptions of Section \ref{s3} and Definition \ref{s3d2}(i), the sets $I(\mathcal{D}(\mathbf{X}))$ and $I(\mathcal{DO}(\mathbf{X}))$ are defined only when $\mathcal{D}(\mathbf{X})\neq\{\emptyset\}\neq\mathcal{DO}(\mathbf{X})$.

As in Definition \ref{s2d1}(3), we denote by $\mathcal{ND}(\mathbf{X})$ the collection of all nowhere dense sets in $\mathbf{X}$. 

In Remarks \ref{s4r1} and \ref{s4r2} below, we show what happens with indiscrete generalized topologies.

\begin{remark}
\label{s4r1}
For a non-empty set $X$, let $\mu=\{\emptyset\}$ and $\mathbf{X}=\langle X, \mu\rangle$. Then the generalized topological space $\mathbf{X}$ is resolvable. Furthermore, $\mathcal{D}(\mathbf{X})=\mathcal{P}(X)=\mathcal{ND}(\mathbf{X})$, $\{\emptyset\}=\mathcal{DO}(\mathbf{X})=\mu[\mathcal{DO}(\mathbf{X})]=\tilde{\mu}(\mathcal{DO}(\mathbf{X}))$ and $\Iso(\mathbf{X})=\emptyset$. We notice that 
\[\lsim_{\mathcal{D}(\mathbf{X})}=\{\langle x, y\rangle\in X\times X: x=y\}\text{ and }\lsim_{\tilde{\mu}[\mathcal{DO}(\mathbf{X})]}=X\times X.\]
Therefore, $\tau[\lsim_{\mathcal{D}(\mathbf{X})}]=\mathcal{P}(X)$ and $\tau[\lsim_{\tilde{\mu}[\mathcal{DO}(\mathbf{X})]}]=\{\emptyset, X\}$. Moreover, the set $I(\mathcal{D}(\mathbf{X}))$ is defined, but $I(\mathcal{DO}(\mathbf{X}))$ is not defined. If $X$ is a singleton, then $I(\mathcal{D}(\mathbf{X}))=X$. If $X$ consists of at least two points, then $I(\mathcal{D}(\mathbf{X}))=\emptyset$.
\end{remark}

\begin{remark}
\label{s4r2}
For a non-empty set $X$, let $\mu=\{\emptyset, X\}$ and $\mathbf{X}=\langle X, \mu\rangle$. Then $\mathcal{D}(\mathbf{X})=\mathcal{P}(X)\setminus\{\emptyset\}$, $\mathcal{ND}(\mathbf{X})=\{\emptyset\}$, $\mathcal{DO}(\mathbf{X})=\{ X\}$, and $\tilde{\mu}[\mathcal{DO}(\mathbf{X})]=\mu[\mathcal{DO}(\mathbf{X})]=\mu$. If $X$ is a singleton, then $\mathbf{X}$ is irresolvable and $\Iso(\mathbf{X})=X=I(\mathcal{D}(\mathbf{X}))$. If $X$ consists of at least two points, then $\mathbf{X}$ is resolvable and $\Iso(\mathbf{X})=\emptyset=I(\mathcal{D}(\mathbf{X}))$. Moreover, $I(\mathcal{DO}(\mathbf{X}))=X$. We notice that 
\[\lsim_{\mathcal{D}(\mathbf{X})}=\{\langle x, y\rangle\in X\times X: x=y\}\text{ and }\lsim_{\tilde{\mu}[\mathcal{DO}(\mathbf{X})]}=X\times X.\]
Therefore, $\tau[\lsim_{\mathcal{D}(\mathbf{X})}]=\mathcal{P}(X)$ and $\tau[\lsim_{\tilde{\mu}[\mathcal{DO}(\mathbf{X})]}]=\mu$. 
\end{remark}

\begin{remark}
\label{s4r3}
Let $\mathbf{X}=\langle X, \mu\rangle$ be a non-indiscrete generalized topological space. Since $\mu\nsubseteq\{\emptyset, X\}$,  we have $\emptyset\neq X\in\mathcal{D}(\mathbf{X})$ and $\emptyset\neq\bigcup\mu\in\mathcal{DO}(\mathbf{X})$, so $I(\mathcal{D}(\mathbf{X}))$ and $I(\mathcal{DO}(\mathbf{X}))$ are both defined.
\end{remark}

In the sequel, we consider non-indiscrete generalized topologies.

\begin{proposition}
\label{s4p4}
For every non-indiscrete generalized topological space $\mathbf{X}$, the following conditions are satisfied:
\begin{enumerate}
\item[(i)] $I(\mathcal{D}(\mathbf{X}))=\Iso(\mathbf{X})$; in consequence, the set $I(\mathcal{D}(\mathbf{X}))$ is open in $\mathbf{X}$;
\item[(ii)] $I(\mathcal{DO}(\mathbf{X}))=\{x\in X: \{x\}\notin\mathcal{ND}(\mathbf{X})\}$;
\item[(iii)] $\Iso(\mathbf{X})\subseteq I(\mathcal{DO}(\mathbf{X}))$;
\item[(iv)] if $\mathbf{X}$ is either a $T_1$-space or an Alexandroff $T_0$-space, then $I(\mathcal{DO}(\mathbf{X}))=\Iso(\mathbf{X})$;
\item[(v)] $\mathbf{X}$ is iso-dense if and only if $\mu[\mathcal{D}(\mathbf{X})]=\super(I(\mathcal{D}(\mathbf{X}))$.
\end{enumerate}
\end{proposition}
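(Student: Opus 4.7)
The plan is to handle the five clauses in order, relying on the fact that non-indiscreteness makes $\mathcal{D}(\mathbf{X})\setminus\{\emptyset\}$ and $\mathcal{DO}(\mathbf{X})\setminus\{\emptyset\}$ non-empty (Remark~\ref{s4r3}), so both intersections $I(\mathcal{D}(\mathbf{X}))$ and $I(\mathcal{DO}(\mathbf{X}))$ are defined. For (i), I would prove both inclusions directly: if $x\in\Iso(\mathbf{X})$ then $\{x\}\in\mu\setminus\{\emptyset\}$, so every $D\in\mathcal{D}(\mathbf{X})$ meets $\{x\}$, giving $x\in I(\mathcal{D}(\mathbf{X}))$; conversely, non-indiscreteness forces $|X|\geq 2$, and if $x\notin\Iso(\mathbf{X})$ then no non-empty open set is contained in $\{x\}$, so $X\setminus\{x\}$ meets every non-empty open set and is therefore a dense set not containing $x$, witnessing $x\notin I(\mathcal{D}(\mathbf{X}))$. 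Openness of $\Iso(\mathbf{X})$ in $\mathbf{X}$ is then automatic, being a union of open singletons.

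For (ii), I would work through the contrapositive. Suppose $x\notin I(\mathcal{DO}(\mathbf{X}))$ and pick $U\in\mathcal{DO}(\mathbf{X})\setminus\{\emptyset\}$ with $x\notin U$; setting $C:=X\setminus U$, density of $U$ gives $\inter_{\mathbf{X}}(C)=\emptyset$, and $\cl_{\mathbf{X}}\{x\}\subseteq C$, so $\{x\}\in\mathcal{ND}(\mathbf{X})$. Conversely, if $\{x\}\in\mathcal{ND}(\mathbf{X})$, I would set $U:=X\setminus\cl_{\mathbf{X}}\{x\}$; this $U$ is open, omits $x$, and is dense because any non-empty open $V$ disjoint from $U$ would lie in $\inter_{\mathbf{X}}(\cl_{\mathbf{X}}\{x\})=\emptyset$. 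Non-emptiness of $U$ uses that $\cl_{\mathbf{X}}\{x\}=X$ would force $\inter_{\mathbf{X}}(X)=\bigcup\mu\neq\emptyset$ by non-indiscreteness, contradicting $\{x\}\in\mathcal{ND}(\mathbf{X})$. Part (iii) is a one-line corollary of (ii): for $x\in\Iso(\mathbf{X})$, $\inter_{\mathbf{X}}(\cl_{\mathbf{X}}\{x\})\supseteq\{x\}\neq\emptyset$.

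For (iv), in the $T_1$ case $\cl_{\mathbf{X}}\{x\}=\{x\}$, so $\{x\}\in\mathcal{ND}(\mathbf{X})$ exactly when $\{x\}\notin\mu$, and combining with (ii) and (iii) yields $I(\mathcal{DO}(\mathbf{X}))=\Iso(\mathbf{X})$. For the Alexandroff $T_0$ case I would work with the specialization partial order $\lsim_\mu$, using that $\cl_{\mathbf{X}}\{x\}=\downarrow[\lsim_\mu,x]$ and that $\uparrow[\lsim_\mu,x]=\bigcap\{U\in\mu:x\in U\}$ is open by Alexandroffness. Given $x$ with $\inter_{\mathbf{X}}(\cl_{\mathbf{X}}\{x\})\neq\emptyset$, pick a non-empty open $V\subseteq\downarrow[\lsim_\mu,x]$ and some $y\in V$; since open sets are $\lsim_\mu$-increasing and $y\lsim_\mu x$, we deduce $x\in V$, whence $\uparrow[\lsim_\mu,x]\subseteq V\subseteq\downarrow[\lsim_\mu,x]$. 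Every $w$ above $x$ is therefore also below $x$, so $T_0$-antisymmetry forces $w=x$; thus $\uparrow[\lsim_\mu,x]=\{x\}\in\mu$, i.e., $x\in\Iso(\mathbf{X})$, and the reverse inclusion is (iii).

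Finally, (v) is immediate from (i) and Proposition~\ref{s2p7}: the identification $\Iso(\mathbf{X})=I(\mathcal{D}(\mathbf{X}))$ rewrites the iso-density criterion $\super(\Iso(\mathbf{X}))=\mathcal{D}(\mathbf{X})\cup\{\emptyset\}=\mu[\mathcal{D}(\mathbf{X})]$ as $\mu[\mathcal{D}(\mathbf{X})]=\super(I(\mathcal{D}(\mathbf{X})))$. I expect the Alexandroff $T_0$ half of (iv) to be the main obstacle: it is the one step that genuinely uses the interaction between Alexandroffness (to promote $\uparrow[\lsim_\mu,x]$ to an open neighbourhood of $x$) and antisymmetry (to collapse it to $\{x\}$). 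The remaining clauses reduce to choosing the right dense witness ($X\setminus\{x\}$ in (i), $X\setminus\cl_{\mathbf{X}}\{x\}$ in (ii)) and short topological computations.
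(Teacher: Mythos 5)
Your proposal is correct and follows essentially the same route as the paper: the same dense witnesses $X\setminus\{x\}$ for (i) and $X\setminus\cl_{\mathbf{X}}(\{x\})$ for (ii), the same reduction of (v) to (i) plus Proposition~\ref{s2p7}, and in the Alexandroff $T_0$ case the same minimal-open-neighbourhood argument (your $\uparrow[\lsim_\mu,x]$ is exactly the paper's $W_0=\bigcap\{W\in\mu: x\in W\}$, merely recast in order-theoretic language). The only cosmetic differences are that you derive (iii) from (ii) rather than directly from the observation that $\Iso(\mathbf{X})$ lies in every dense set, and you supply slightly more detail (e.g.\ non-emptiness of $X\setminus\cl_{\mathbf{X}}(\{x\})$) than the paper does.
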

\begin{proof}
Let $\mathbf{X}=\langle X, \mu\rangle$ be a non-indiscrete generalized topological space. Since $\Iso(\mathbf{X})$ is a subset of every dense set in $\mathbf{X}$, (iii) holds and $\Iso(\mathbf{X})\subseteq I(\mathcal{D}(\mathbf{X}))$. Furthermore, if $x\in X\setminus\Iso(\mathbf{X})$, then the set $X\setminus\{x\}$ is dense in $\mathbf{X}$ and, therefore, $I(\mathcal{D}(\mathbf{X}))\subseteq X\setminus\{x\}$. This implies that $I(\mathcal{D}(\mathbf{X}))\subseteq\Iso(\mathbf{X})$. Hence $I(\mathcal{D}(\mathbf{X}))=\Iso(\mathbf{X})$. Thus, since $\Iso(\mathbf{X})$ is open in $\mathbf{X}$, (i) holds. 

(ii) Let $x\in X$ be such that $\{x\}\in\mathcal{ND}(\mathbf{X})$. Then the set $X\setminus\cl_{\mathbf{X}}(\{x\})$ is non-empty, dense and open in $\mathbf{X}$, so $I(\mathcal{DO}(\mathbf{X}))\subseteq X\setminus\cl_{\mathbf{X}}(\{x\})$. This implies that $I(\mathcal{DO}(\mathbf{X}))\subseteq\{x\in X: \{x\}\notin\mathcal{ND}(\mathbf{X})\}$. To show that the reverse inclusion also holds, consider any set $D\in\mathcal{DO}(\mathbf{X})$ and any $x\in X$ with $\{x\}\notin\mathcal{ND}(\mathbf{X})$. Then the set $V=\inter_{\mathbf{X}}(\cl_{\mathbf{X}}(\{x\}))$ is a non-empty open set in $\mathbf{X}$. If $x\notin D$, then $\cl_{\mathbf{X}}(\{x\})\subseteq X\setminus D$ and $V\subseteq X\setminus D$. Since $V\cap D\neq\emptyset$, we deduce that $x\in D$. Hence $\{x\in X: \{x\}\notin\mathcal{ND}(\mathbf{X})\}\subseteq I(\mathcal{DO}(\mathbf{X}))$. This shows that (ii) holds.

(iv) Suppose that $\mathbf{X}$ is a $T_1$-space. Since every singleton of $\mathbf{X}$ is closed in $\mathbf{X}$, it is easily seen that $\{x\in X: \{x\}\notin\mathcal{ND}(\mathbf{X})\}=\Iso(\mathbf{X})$.

Now, suppose that $\mathbf{X}$ is an Alexandroff $T_0$-space. Consider any $x_0\in X$ such that $\{x_0\}\notin \mathcal{ND}(\mathbf{X})$. Let $W_0=\bigcap\{W\in\mu: x_0\in W\}$. Since $\mathbf{X}$ is an Alexandroff space, $W_0\in\mu$. Since $\{x_0\}\notin\mathcal{ND}(\mathbf{X})$, we have $x_0\in\inter_{\mathbf{X}}(\cl_{\mathbf{X}}(\{x_0\}))$. Therefore $W_0\subseteq\cl_{\mathbf{X}}(\{x_0\})$. Suppose that there exists $y\in W_0\setminus\{x_0\}$. Since $\mathbf{X}$ is a $T_0$-space and $y\in\cl_{\mathbf{X}}(\{x_0\})$, we have $x_0\notin\cl_{\mathbf{X}}(\{y\})$. This implies that $W_0\subseteq X\setminus \cl_{\mathbf{X}}(\{y\})$. But this is impossible for $y\in W_0$. The contradiction obtained shows that $W_0=\{x_0\}$. Hence $\{x\in X: \{x\}\notin\mathcal{ND}(\mathbf{X})\}\subseteq\Iso(\mathbf{X})$. This, together with (ii) and (iii), shows that (iv) holds.

That (v) holds follows from (i) and Proposition \ref{s2p7}.
\end{proof}

Among other things, we indicate which of the spaces from the examples given below are resolvable. The following example shows that even if an Alexandroff space is not indiscrete, it may happen that $I(\mathcal{D}(\mathbf{X}))\neq I(\mathcal{DO}(\mathbf{X}))$.

\begin{example}
\label{s4e5}
Let $X=\{1,2,3\}$, $\mu=\{\emptyset, X, \{1, 2\}\}$  and $\mathbf{X}=\langle X, \mu\rangle$. Then $\mathbf{X}$ is a non-indiscrete resolvable  space, $I(\mathcal{D}(\mathbf{X}))=\emptyset$ and $I(\mathcal{DO}(\mathbf{X}))=\{1,2\}$.
\end{example}

We show in the following example that,  for a non-indiscrete topological $T_0$-space $\mathbf{X}$, it may happen that $I(\mathcal{DO}(\mathbf{X}))\neq\Iso(\mathbf{X})$ and the set $I(\mathcal{DO}(\mathbf{X}))$ need not be open in $\mathbf{X}$.

\begin{example}
\label{s4e6}
Let $Y=\{0,1\}$, $\tau=\{\emptyset, Y, \{0\}\}$ and $\mathbf{Y}=\langle Y, \tau\rangle$. Let $\mathbf{X}=\mathbf{Y}^{\mathbb{N}}$. The topological space $\mathbf{X}$ is a dense-in-itself $T_0$-space which is not indiscrete. We have $\Iso(\mathbf{X})=\emptyset$. Let $x_0\in\{0,1\}^{\mathbb{N}}$ be defined by: for every $n\in\mathbb{N}$, $x_0(n)=0$. It is easily seen that $\{x_0\}=\{x\in X: \{x\}\notin\mathcal{ND}(\mathbf{X})\}$. Hence, by Proposition \ref{s4p4}(ii), $I(\mathcal{DO}(\mathbf{X}))=\{x_0\}$. The set $\{x_0\}$ is both dense and not open in $\mathbf{X}$.

Of course, the space $\mathbf{Y}$ is irresolvable. To see that $\mathbf{X}$ is resolvable, for every $k\in\mathbb{N}$, we define elements $y^{k}$ of $\{0, 1\}^{\mathbb{N}}$ as follows. For every $n\in\mathbb{N}$, we put:
\[ y^{k}(n)=\begin{cases} 0&\text{ if } n\leq k,\\
1&\text{ if } k<n \text{ and } n \text{ is even},\\
0&\text{ if } k<n \text{ and } n\text{ is odd}.\end{cases}\]

Let $D=\{y^k: k\in\mathbb{N}\}$. The sets $D$ and $\{0, 1\}^{\mathbb{N}}\setminus D$ are both dense in $\mathbf{X}$. This shows that $\mathbf{X}$ is resolvable.
\end{example}

\begin{theorem}
\label{s4t7}
For every non-indiscrete generalized topological space $\mathbf{X}$, the following conditions are satisfied:
\begin{enumerate}
\item[(i)] if $\mathbf{X}$ is dense-in-itself, then $\tau[\lsim_{\mathcal{D}(\mathbf{X})}]$ is the discrete topology on $X$;
\item[(ii)]  if every singleton of $\mathbf{X}$ is nowhere dense in $\mathbf{X}$, then $\tau[\lsim_{\tilde{\mu}[\mathcal{DO}(\mathbf{X})]}]$ is the discrete topology on $X$.
\end{enumerate}
\end{theorem}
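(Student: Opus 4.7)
The plan is to reduce both parts to Theorem \ref{s3t9}, which already tells us that $\tau[\lsim_{\tilde{\mu}[\mathcal{A}]}]$ is discrete whenever $I(\mathcal{A})=\emptyset$. So my only job in each part is to choose the right family $\mathcal{A}$ (for (i) it will be $\mathcal{D}(\mathbf{X})$; for (ii) it will be $\mathcal{DO}(\mathbf{X})$), to verify that this $\mathcal{A}$ meets the standing assumptions of Section \ref{s3} (non-empty and $\neq\{\emptyset\}$), and to identify the set $I(\mathcal{A})$ using Proposition \ref{s4p4}.

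For part (i), I would take $\mathcal{A}=\mathcal{D}(\mathbf{X})$. Since $\mathbf{X}$ is non-indiscrete, $X\in\mathcal{D}(\mathbf{X})$ and $X\neq\emptyset$, so $\mathcal{A}$ is non-empty and different from $\{\emptyset\}$. The key preliminary observation is that $\mu[\mathcal{D}(\mathbf{X})]=\tilde{\mu}[\mathcal{D}(\mathbf{X})]=\mathcal{D}(\mathbf{X})\cup\{\emptyset\}$: every non-empty dense set $D$ satisfies $x\in D\subseteq D$ for each of its points, placing $D$ in $\mu[\mathcal{A}]$, and any superset of a non-empty dense set is dense. Proposition \ref{s3p4}(2) then gives $\lsim_{\mathcal{D}(\mathbf{X})}=\lsim_{\mu[\mathcal{D}(\mathbf{X})]}=\lsim_{\tilde{\mu}[\mathcal{D}(\mathbf{X})]}$, so the three associated specialization topologies coincide. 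Since $\mathbf{X}$ is dense-in-itself, $\Iso(\mathbf{X})=\emptyset$, and Proposition \ref{s4p4}(i) yields $I(\mathcal{A})=I(\mathcal{D}(\mathbf{X}))=\Iso(\mathbf{X})=\emptyset$. Theorem \ref{s3t9} now says $\tau[\lsim_{\tilde{\mu}[\mathcal{A}]}]$ is discrete, which by the equality of the quasiorders gives the discreteness of $\tau[\lsim_{\mathcal{D}(\mathbf{X})}]$.

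For part (ii) I would take $\mathcal{A}=\mathcal{DO}(\mathbf{X})$, which by Remark \ref{s4r3} contains the non-empty set $\bigcup\mu$, so again $\mathcal{A}\neq\emptyset$ and $\mathcal{A}\neq\{\emptyset\}$. By Proposition \ref{s4p4}(ii), $I(\mathcal{DO}(\mathbf{X}))=\{x\in X:\{x\}\notin\mathcal{ND}(\mathbf{X})\}$, which is empty under the hypothesis that every singleton is nowhere dense. A direct application of Theorem \ref{s3t9} then gives that $\tau[\lsim_{\tilde{\mu}[\mathcal{DO}(\mathbf{X})]}]$ is discrete. The only subtlety I anticipate is the one already handled in part (i): making sure the conclusion of Theorem \ref{s3t9} (which is formulated for $\tilde{\mu}[\mathcal{A}]$) transfers to the quasiorder actually named in the statement. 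In part (ii) no transfer is needed, and in part (i) the transfer is immediate because $\mu[\mathcal{D}(\mathbf{X})]=\tilde{\mu}[\mathcal{D}(\mathbf{X})]$.
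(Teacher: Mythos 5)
Your proposal is correct and follows essentially the same route as the paper: both parts reduce to Theorem \ref{s3t9} via the identifications of $I(\mathcal{D}(\mathbf{X}))$ and $I(\mathcal{DO}(\mathbf{X}))$ in Proposition \ref{s4p4}. The only difference is that you make explicit the transfer from $\tau[\lsim_{\tilde{\mu}[\mathcal{D}(\mathbf{X})]}]$ to $\tau[\lsim_{\mathcal{D}(\mathbf{X})}]$ in part (i), which the paper leaves implicit (using $\mu[\mathcal{D}(\mathbf{X})]=\tilde{\mu}[\mathcal{D}(\mathbf{X})]$).
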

\begin{proof} Let us fix a non-indiscrete generalized topological space $\mathbf{X}$.

 (i) Suppose that $\mathbf{X}$ is dense-in-itself. Then $\Iso(\mathbf{X})=\emptyset$, so $\tau[\lsim_{\mathcal{D}(\mathbf{X})}]$ is discrete by Theorem \ref{s3t9} and Proposition \ref{s4p4}(i).

(ii) Now, assume that all singletons of  $X$  are nowhere dense in $\mathbf{X}$. Then it follows from Proposition \ref{s4p4}(ii) that $I(\mathcal{DO}(\mathbf{X}))=\emptyset$. Hence, by Theorem \ref{s3t9}, (ii) holds.
\end{proof}

The following theorem characterizes non-indiscrete iso-dense generalized topological spaces and is the first main result of this section.

\begin{theorem}
\label{s4t8}
Let $\mathbf{X}=\langle X,\mu\rangle$ be a non-indiscrete generalized topological space. Then the following conditions are equivalent:
\begin{enumerate}
\item[(i)] $\tau[\lsim_{\mathcal{D}(\mathbf{X})}]=\mu[\mathcal{D}(\mathbf{X})]$;
\item[(ii)] $\mu[\mathcal{D}(\mathbf{X})]$ is an Alexandroff topology on $X$;
\item[(iii)] $\mathbf{X}$ is iso-dense.
\end{enumerate}
\end{theorem}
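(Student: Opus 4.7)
The plan is to apply Theorem \ref{s3t7} to the family $\mathcal{A}=\mathcal{D}(\mathbf{X})$. Since $\mathbf{X}$ is non-indiscrete, the underlying set $X$ is non-empty and there exists a non-empty $\mu$-open set, so $X\in\mathcal{D}(\mathbf{X})$ while $\emptyset\notin\mathcal{D}(\mathbf{X})$; thus $\mathcal{D}(\mathbf{X})$ satisfies the standing hypotheses of Section \ref{s3}, and $\bigcup\mathcal{D}(\mathbf{X})=X$. Noting that $\mu[\mathcal{D}(\mathbf{X})]=\mathcal{D}(\mathbf{X})\cup\{\emptyset\}$, the equivalence $(i)\Leftrightarrow(ii)$ is then an immediate instance of the equivalence of clauses (ii) and (iii) of Theorem \ref{s3t7}.

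For $(iii)\Rightarrow(ii)$, I will invoke Proposition \ref{s4p4}(v): if $\mathbf{X}$ is iso-dense, then $\mu[\mathcal{D}(\mathbf{X})]=\super(I(\mathcal{D}(\mathbf{X})))$, and the topology of supersets is Alexandroff by the discussion preceding Proposition \ref{s2p7}, so (ii) follows.

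The substantive direction is $(ii)\Rightarrow(iii)$, and this is where I expect the main obstacle. Assuming $\mu[\mathcal{D}(\mathbf{X})]$ is Alexandroff, I apply the Alexandroff property to the non-empty subfamily $\mathcal{D}(\mathbf{X})\setminus\{\emptyset\}\subseteq\mu[\mathcal{D}(\mathbf{X})]$ to obtain $I(\mathcal{D}(\mathbf{X}))\in\mu[\mathcal{D}(\mathbf{X})]=\mathcal{D}(\mathbf{X})\cup\{\emptyset\}$. By Proposition \ref{s4p4}(i), $I(\mathcal{D}(\mathbf{X}))=\Iso(\mathbf{X})$, so $\Iso(\mathbf{X})$ is either $\mu$-dense, which gives (iii), or empty. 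The remaining possibility is ruled out by contradiction: if $\mathbf{X}$ were dense-in-itself, then Theorem \ref{s4t7}(i) would make $\tau[\lsim_{\mathcal{D}(\mathbf{X})}]$ discrete, and combining this with the already-proved implication $(ii)\Rightarrow(i)$ would force $\mu[\mathcal{D}(\mathbf{X})]=\mathcal{P}(X)$; then every singleton $\{x\}$ would be $\mu$-dense, forcing each point of $X$ to lie in every non-empty $\mu$-open set, so every non-empty $\mu$-open set would equal $X$, contradicting non-indiscreteness.
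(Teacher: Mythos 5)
Your proof is correct and follows essentially the same route as the paper's: Theorem \ref{s3t7} for the equivalence of (i) and (ii), the identification $I(\mathcal{D}(\mathbf{X}))=\Iso(\mathbf{X})$ from Proposition \ref{s4p4}(i), and Theorem \ref{s4t7}(i) combined with non-indiscreteness to exclude the dense-in-itself case. The only cosmetic difference is that you bypass Theorem \ref{s3t10} entirely, using Proposition \ref{s4p4}(v) for (iii)$\Rightarrow$(ii) and the raw Alexandroff intersection property for (ii)$\Rightarrow$(iii); this works equally well here because $\tilde{\mu}[\mathcal{D}(\mathbf{X})]=\mu[\mathcal{D}(\mathbf{X})]$.
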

\begin{proof}
That conditions (i) and (ii) are equivalent follows from Theorem \ref{s3t7}. 

To show that (iii) implies (i), let us assume that the space $\mathbf{X}$ is iso-dense. Then the set $\Iso(\mathbf{X})$ is non-empty and dense in $\mathbf{X}$. By Proposition \ref{s4p4}(i), $\Iso(\mathbf{X})=I(\mathcal{D}(\mathbf{X}))$. Hence, $\emptyset\neq I(\mathcal{D}(\mathbf{X}))\in\mathcal{D}(\mathbf{X})$. Furthermore, we know that $\tilde{\mu}[\mathcal{D}(\mathbf{X})]=\mu[\mathcal{D}(\mathbf{X})]$. Thus, we can easily infer from Theorem \ref{s3t10} that (i) holds.

Now, let us prove that (i) implies (iii). First, suppose that $\mathbf{X}$ is dense-in-itself. Then, by Theorem \ref{s4t7}, $\tau[\lsim_{\mathcal{D}(\mathbf{X})}]$ is the discrete topology on $X$. Since $\mathbf{X}$ is not indiscrete, there exists $U\in\mu$ such that $\emptyset\neq U\neq X$. Let $C=X\setminus U$. Then $C\notin\mu[\mathcal{D}(\mathbf{X})]$ but $C\in\tau[\lsim_{\mathcal{D}(\mathbf{X})}]$. Hence, if (i) holds, the space $\mathbf{X}$ is not dense-in-itself. 

Let us assume that (i) is true. Then we have already shown that $\Iso(\mathbf{X})\neq\emptyset$. It follows from Proposition \ref{s4p4}(i) that $I(\mathcal{D}(\mathbf{X}))\neq\emptyset$. Since $\tilde{\mu}[\mathcal{D}(\mathbf{X})]=\mu[\mathcal{D}(\mathbf{X})]$, we deduce from Theorem \ref{s3t10} that $I(\mathcal{D}(\mathbf{X}))\in\mathcal{D}(\mathbf{X})$. Proposition \ref{s4p4}(i) completes the proof.
\end{proof}

The following theorem is the second main result of this section. It characterizes non-trivial generalized topological spaces $\mathbf{X}$ such that the set of all not nowhere dense singletons of $\mathbf{X}$ is both dense and open in $\mathbf{X}$. 

\begin{theorem}
\label{s4t9}
Let $\mathbf{X}=\langle X,\mu\rangle$ be a non-indiscrete generalized topological space. Then the following conditions are equivalent:
\begin{enumerate}
\item[(i)] $\tau[\lsim_{\tilde{\mu}[\mathcal{DO}(\mathbf{X})]}]=\tilde{\mu}[\mathcal{DO}(\mathbf{X})]$;
\item[(ii)] $\{x\in X: \{x\}\notin\mathcal{ND}(\mathbf{X})\}\in\mathcal{DO}(\mathbf{X})$;
\item[(iii)] $\tilde{\mu}[\mathcal{DO}(\mathbf{X})]=\super(\{x\in X: \{x\}\notin\mathcal{ND}(\mathbf{X})\})$; 
\item[(iv)] $\tilde{\mu}[\mathcal{DO}(\mathbf{X})]$ is an Alexandroff topology on $X$.
\end{enumerate}
\end{theorem}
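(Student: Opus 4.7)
The plan is to reduce Theorem \ref{s4t9} to Theorem \ref{s3t10} applied with $\mathcal{A} := \mathcal{DO}(\mathbf{X})$. First I would record the preliminaries. By Remark \ref{s4r3}, non-indiscreteness of $\mathbf{X}$ gives $\mathcal{DO}(\mathbf{X})\neq\emptyset$ and $\mathcal{DO}(\mathbf{X})\neq\{\emptyset\}$, so the standing hypothesis of Section \ref{s3} is met. A short verification shows $\mu[\mathcal{DO}(\mathbf{X})]=\mathcal{DO}(\mathbf{X})\cup\{\emptyset\}$, because any non-empty union of dense open sets is dense and open, and any non-empty member of $\mu[\mathcal{DO}(\mathbf{X})]$ is such a union. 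Furthermore, Proposition \ref{s4p4}(ii) identifies $I(\mathcal{DO}(\mathbf{X}))$ as precisely the set $\{x\in X:\{x\}\notin\mathcal{ND}(\mathbf{X})\}$ appearing in (ii) and (iii). Hence, whenever $I(\mathcal{DO}(\mathbf{X}))\neq\emptyset$, the four conditions of Theorem \ref{s4t9} correspond term by term to the four conditions of Theorem \ref{s3t10}.

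I would then split into cases. If $I(\mathcal{DO}(\mathbf{X}))\neq\emptyset$, Theorem \ref{s3t10} yields the equivalences directly, after noting that the hypothesis ``$I(\mathcal{DO}(\mathbf{X}))\in\mu[\mathcal{DO}(\mathbf{X})]$'' of Theorem \ref{s3t10}(ii) reduces to ``$I(\mathcal{DO}(\mathbf{X}))\in\mathcal{DO}(\mathbf{X})$'' thanks to $I\neq\emptyset$ and the identification of $\mu[\mathcal{DO}(\mathbf{X})]$ above. If $I(\mathcal{DO}(\mathbf{X}))=\emptyset$, I would show that each of (i)--(iv) fails, making them trivially equivalent. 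Condition (ii) is immediately ruled out because it would demand $\emptyset\in\mathcal{DO}(\mathbf{X})$, impossible since $\emptyset$ is not dense in a non-indiscrete space. For (iii), $\super(\emptyset)=\mathcal{P}(X)$, so (iii) becomes the assertion $\tilde{\mu}[\mathcal{DO}(\mathbf{X})]=\mathcal{P}(X)$. For (i), Theorem \ref{s3t9} forces $\tau[\lsim_{\tilde{\mu}[\mathcal{DO}(\mathbf{X})]}]=\mathcal{P}(X)$, so (i) likewise reduces to $\tilde{\mu}[\mathcal{DO}(\mathbf{X})]=\mathcal{P}(X)$. Finally, (iv) implies via Theorem \ref{s1t5} (in the refined form of Theorem \ref{s3t7}) that $\tilde{\mu}[\mathcal{DO}(\mathbf{X})]=\tau[\lsim_{\tilde{\mu}[\mathcal{DO}(\mathbf{X})]}]$, giving the same equality.

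The crux is therefore the single claim that $\tilde{\mu}[\mathcal{DO}(\mathbf{X})]\neq\mathcal{P}(X)$ in the case $I(\mathcal{DO}(\mathbf{X}))=\emptyset$, and this is where I expect the only real work. I would argue by contradiction: if $\tilde{\mu}[\mathcal{DO}(\mathbf{X})]=\mathcal{P}(X)$, then every singleton $\{x\}$ belongs to $\tilde{\mu}[\mathcal{DO}(\mathbf{X})]$, so by Proposition \ref{s3p3}(1) each $\{x\}$ contains a non-empty element of $\mathcal{DO}(\mathbf{X})$; the only candidate is $\{x\}$ itself, whence every singleton is dense and open in $\mathbf{X}$. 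Openness of every singleton forces $\mu=\mathcal{P}(X)$; but on a discrete space with at least two points, two disjoint singletons cannot both be dense, so $|X|=1$, giving $\mu=\{\emptyset,X\}$ and contradicting the non-indiscreteness of $\mathbf{X}$. All remaining steps amount to bookkeeping, verifying that the translations between the statements of Theorem \ref{s3t10} and Theorem \ref{s4t9} match under the identifications above.
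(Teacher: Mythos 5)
Your proposal is correct and follows essentially the same route as the paper: identify $I(\mathcal{DO}(\mathbf{X}))$ via Proposition \ref{s4p4}(ii), split on whether it is empty, invoke Theorem \ref{s3t10} when it is non-empty, and use the discreteness results (Theorem \ref{s3t9}) to show all conditions fail when it is empty. The only cosmetic difference is your argument that $\tilde{\mu}[\mathcal{DO}(\mathbf{X})]\neq\mathcal{P}(X)$ via singletons, where the paper instead observes that the complement of a proper non-empty open set is not dense and hence not in $\tilde{\mu}[\mathcal{DO}(\mathbf{X})]$; both are equally valid.
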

\begin{proof}
We know from Proposition \ref{s4p4}(ii) that $I(\mathcal{DO}(\mathbf{X}))=\{x\in X: \{x\}\notin\mathcal{ND}(\mathbf{X})\}$. That conditions (i) and (iv) are equivalent follows from Theorem \ref{s3t7}. As in the proof of Theorem \ref{s4t8}, we fix a set $U\in\mu$ such that $\emptyset\neq U\neq X$, and put  $C=X\setminus U$. We notice that $C\notin\mathcal{D}(\mathbf{X})$, so $C\notin\tilde{\mu}[\mathcal{DO}(\mathbf{X})]$. 

Suppose that $I(\mathcal{DO}(\mathbf{X}))=\emptyset$. Then $\super(I(\mathcal{DO}(\mathbf{X})))$ is discrete, so (iii) is false. By Theorem \ref{s4t7}, the topology $\tau[\lsim_{\tilde{\mu}[\mathcal{DO}(\mathbf{X})]}]$ is also discrete. Hence (i) is also false. In consequence, each of the conditions (i) and (iii) implies that $I(\mathcal{DO}(\mathbf{X}))\neq\emptyset$. Of course, (ii) also implies that $I(\mathcal{DO}(\mathbf{X}))\neq\emptyset$. Therefore, that conditions (i), (ii) and (iii) are equivalent follows from Theorem \ref{s3t10}.
\end{proof}

\begin{remark}
\label{s4r10}
Let $X$ be a non-empty set, $\mu=\{\emptyset\}$ and $\mathbf{X}=\langle X, \mu\rangle$ (see Remark \ref{s4r1}). Then $\Iso(\mathbf{X})=\emptyset$, but $\mathbf{X}$ is iso-dense and $\emptyset=\{x\in X: \{x\}\notin\mathcal{ND}(\mathbf{X})\}\in\mathcal{DO}(\mathbf{X})$. However, since $X\notin\mu$, none of the conditions (i), (iii) and (iv) of Theorem \ref{s4t9} is satisfied.
\end{remark}

\begin{remark}
\label{s4r11}
Let $X$ be a non-empty set, $\mu=\{\emptyset, X\}$ and $\mathbf{X}=\langle X, \mu\rangle$ (see Remark \ref{s4r2}). Then all the conditions (i)--(iv) of Theorem \ref{s4t9} are satisfied.
\end{remark}

\begin{corollary}
\label{s4c12}
Let $\mathbf{X}=\langle X, \mu\rangle$ be a $T_1$-space such that $\mu\neq\{\emptyset\}$. Then that $\mathbf{X}$ is iso-dense is equivalent to each of the conditions (i)-(iv) of Theorem \ref{s4t9}. 
\end{corollary}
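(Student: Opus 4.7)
The plan is to deduce Corollary \ref{s4c12} directly from Theorem \ref{s4t9} together with item (iv) of Proposition \ref{s4p4}. The key observation is that the $T_1$ hypothesis collapses the set $I(\mathcal{DO}(\mathbf{X}))$ appearing in condition (ii) of Theorem \ref{s4t9} to the set $\Iso(\mathbf{X})$ of isolated points; since $\Iso(\mathbf{X})$ is always $\mu$-open, iso-density then becomes precisely the statement that $\Iso(\mathbf{X})\in\mathcal{DO}(\mathbf{X})$.

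First I would dispose of the indiscrete boundary case. If $\mathbf{X}$ is indiscrete and $\mu\neq\{\emptyset\}$, then $\mu=\{\emptyset,X\}$ with $X\neq\emptyset$. The $T_1$ hypothesis forces the complement of every singleton to be $\mu$-open, and since the only non-empty member of $\mu$ is $X$ itself, this is possible only if $|X|=1$. In this situation $\mathbf{X}$ is trivially iso-dense and, as recorded in Remark \ref{s4r11}, conditions (i)--(iv) of Theorem \ref{s4t9} are all satisfied, so the claimed equivalence holds.

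For the non-indiscrete case, Proposition \ref{s4p4}(iv) yields $I(\mathcal{DO}(\mathbf{X}))=\Iso(\mathbf{X})$, so condition (ii) of Theorem \ref{s4t9} becomes the assertion that $\Iso(\mathbf{X})\in\mathcal{DO}(\mathbf{X})$. Since $\Iso(\mathbf{X})$ is a union of $\mu$-open singletons and hence always $\mu$-open, this is equivalent to $\Iso(\mathbf{X})\in\mathcal{D}(\mathbf{X})$, which by definition says that $\mathbf{X}$ is iso-dense. Theorem \ref{s4t9} then supplies the equivalence of iso-density with each of conditions (i)--(iv), finishing the argument.

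I do not anticipate any genuine obstacle: the real content has already been isolated in Proposition \ref{s4p4}(iv) and Theorem \ref{s4t9}, and Corollary \ref{s4c12} is merely their clean $T_1$ repackaging. The one place where some care is needed is the indiscrete boundary case, where the argument routes through the slightly degenerate fact that a $T_1$-space whose topology is $\{\emptyset,X\}$ must be a singleton.
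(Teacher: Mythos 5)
Your proposal is correct and follows essentially the same route as the paper: the degenerate case is dispatched via Remark \ref{s4r11} after noting that $T_1$ together with $\mu\neq\{\emptyset\}$ forces an indiscrete space to be a singleton, and the main case reduces condition (ii) of Theorem \ref{s4t9} to iso-density by means of items (ii) and (iv) of Proposition \ref{s4p4} and the openness of $\Iso(\mathbf{X})$. The only difference is cosmetic (you split on indiscreteness where the paper splits on $|X|\ge 2$, which amounts to the same thing here).
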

\begin{proof}
If $X$ consists of at least two points, then $\mathbf{X}$ is not indiscrete, so the result follows from  Theorem \ref{s4t9} taken together with items (ii) and (iv) of Proposition \ref{s4p4}.

Suppose that $X$ is either empty or a singleton. Then, since $\mu\neq\{\emptyset\}$, we have $\mu=\{\emptyset, X\}$,  $\mathbf{X}$ is iso-dense and, in view of Remark \ref{s4r11}, each of the conditions (i)--(iv) of Theorem \ref{s4t9} is satisfied. 
\end{proof}

\section{Final comments on Questions \ref{s1q7} and \ref{s1q8}}
\label{s5}

Theorem \ref{s4t8} and Corollary \ref{s4c12} answer Questions \ref{s1q8} and \ref{s2q4}, and can be regarded as partial answers to Question \ref{s1q7}. To search for a more satisfactory answer to Question \ref{s1q7}, one needs to consider the formulae $\mathbf{F}_d$ and $\mathbf{F}_d^T$ defined as follows.

\begin{definition}
\label{s5d1} 
Let $\mathbf{X}=\langle X, \mu\rangle$ be a generalized topological space. 
\begin{enumerate}
\item[(i)] $\mathbf{F}_d(\mathbf{X})$  and $\mathbf{F}_d(\mu)$ denote the statement: for every pair $A, B$ of dense sets of $\mathbf{X}$, the set $A\cap B$ is dense in $\mathbf{X}$.
\item[(ii)] $\mathbf{F}_d^{T}(\mathbf{X})$ and $\mathbf{F}_d^T(\mu)$ denote the statement:  the generalized topology $\mu[\mathcal{D}(\mathbf{X})]$ is a topology on $\mathbf{X}$. 
\end{enumerate}
\end{definition}

\begin{remark}
\label{s5r2}
\begin{enumerate}
\item[(a)] Note that $\mathbf{F}_d^{T}(\mathbf{X})$ is equivalent to the statement: for every pair $A, B$ of dense sets of $\mathbf{X}$, the set $A\cap B$ is empty or dense in $\mathbf{X}$. (See Remark \ref{s2r3}.)
\item[(b)] In general, even for a topological space $\mathbf{X}$, $\mathbf{F}_d^{T}(\mathbf{X})$ need not imply $\mathbf{F}_d(\mathbf{X})$. To see this, we assume that $X$ is a set consisting of at least two points, $\tau=\{\emptyset, X\}$ and $\mathbf{X}=\langle X, \tau\rangle$ (see Remark \ref{s4r2}). For the indiscrete resolvable space $\mathbf{X}=\langle X, \tau\rangle$,  $\mathbf{F}_d^T(\mathbf{X})$ holds but $\mathbf{F}_d(\mathbf{X})$ does not.
\end{enumerate}
\end{remark}

The following proposition is straightforward.

\begin{proposition}
\label{s5p3}
For every generalized topological space $\mathbf{X}$, the following conditions are satisfied:
\begin{enumerate}
\item[(i)] if $\mathbf{X}$ is iso-dense, then $\mathbf{F}_d(\mathbf{X})$ holds;
\item[(ii)] $\mathbf{F}_d(\mathbf{X})$ implies $\mathbf{F}_d^{T}(\mathbf{X})$.
\end{enumerate}
\end{proposition}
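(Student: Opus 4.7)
The plan is to verify both implications directly from the definitions, using Proposition \ref{s2p7} for (i) and checking the missing axiom for a topology in (ii). A useful preliminary observation, immediate from Definition \ref{s2d1}(5), is that every superset of a $\mu$-dense set is $\mu$-dense, and that $X$ itself is always $\mu$-dense in $\mathbf{X}$.

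For (i), I would suppose $\mathbf{X}=\langle X,\mu\rangle$ is iso-dense, so that $\Iso(\mathbf{X})\in\mathcal{D}(\mathbf{X})$. Given any $A,B\in\mathcal{D}(\mathbf{X})$, Proposition \ref{s2p7} places both $A$ and $B$ in $\super(\Iso(\mathbf{X}))$, so each of them is either empty or a superset of $\Iso(\mathbf{X})$. In the principal case one has $\Iso(\mathbf{X})\subseteq A\cap B$, and then the superset observation forces $A\cap B\in\mathcal{D}(\mathbf{X})$. The alternative, that $A$ or $B$ is empty, can occur only when $\mu=\{\emptyset\}$ (since $\emptyset\in\mathcal{D}(\mathbf{X})$ forces $\mu\setminus\{\emptyset\}=\emptyset$), and in that trivial case every subset of $X$ is vacuously dense, so $\mathbf{F}_d(\mathbf{X})$ still holds.

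For (ii), I would assume $\mathbf{F}_d(\mathbf{X})$ and argue that $\mu[\mathcal{D}(\mathbf{X})]=\mathcal{D}(\mathbf{X})\cup\{\emptyset\}$, which is already a generalized topology on $X$ and (by the preliminary observation) contains $X$, is in addition closed under binary intersections. Given $A,B\in\mu[\mathcal{D}(\mathbf{X})]$, either one of them is $\emptyset$, in which case $A\cap B=\emptyset\in\mu[\mathcal{D}(\mathbf{X})]$, or both lie in $\mathcal{D}(\mathbf{X})$, in which case $\mathbf{F}_d(\mathbf{X})$ puts $A\cap B$ back in $\mathcal{D}(\mathbf{X})\subseteq\mu[\mathcal{D}(\mathbf{X})]$. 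No step presents a serious difficulty; the only care required is distinguishing empty from nonempty dense sets, and flagging the tacit verification that $X\in\mu[\mathcal{D}(\mathbf{X})]$, without which the resulting family would fall short of being a topology in the sense of Definition \ref{s1d2}.
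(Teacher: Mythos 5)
Your proof is correct; the paper states this proposition without proof as ``straightforward,'' and your direct verification --- every dense set contains $\Iso(\mathbf{X})$ (Proposition \ref{s2p7}), so iso-density makes $A\cap B$ a superset of a dense set, while $\mathbf{F}_d(\mathbf{X})$ supplies exactly the closure under finite intersections that the generalized topology $\mathcal{D}(\mathbf{X})\cup\{\emptyset\}$ might lack --- is precisely the intended argument. Your handling of the edge cases ($\mu=\{\emptyset\}$, and the tacit check that $X\in\mathcal{D}(\mathbf{X})$) is also correct.
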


\begin{proposition}
\label{s5p4}
Let $\mathbf{X}=\langle X, \mu\rangle$ be generalized topological space such that $\mu\neq\{\emptyset, X\}$. Then $\mathbf{F}_d^T(\mathbf{X})$ and $\mathbf{F}_d(\mathbf{X})$ are equivalent. Furthermore, if the space $\mathbf{X}$ is resolvable and non-indiscrete, then $\mathbf{F}_d^T(\mathbf{X})$ is false.
\end{proposition}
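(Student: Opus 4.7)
My approach is to get the nontrivial direction $\mathbf{F}_d^T(\mathbf{X}) \Rightarrow \mathbf{F}_d(\mathbf{X})$ by contradiction, since $\mathbf{F}_d(\mathbf{X}) \Rightarrow \mathbf{F}_d^T(\mathbf{X})$ is already Proposition \ref{s5p3}(ii). First I would dispose of the degenerate subcase $\mu = \{\emptyset\}$ separately: there every subset of $X$ is vacuously dense, so $\mathbf{F}_d(\mathbf{X})$ is immediate. Hence in what follows I may assume that $\mu$ contains at least one non-empty open set.

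Now suppose $\mathbf{F}_d^T(\mathbf{X})$ holds but some $A, B \in \mathcal{D}(\mathbf{X})$ have $A \cap B$ not $\mu$-dense. By $\mathbf{F}_d^T(\mathbf{X})$ this already forces $A \cap B = \emptyset$, and I will squeeze $\mu$ down to $\{\emptyset, X\}$ via one enlargement manoeuvre, which I expect to be the main idea worth locating. For an arbitrary $z \in X$, the sets $A \cup \{z\}$ and $B \cup \{z\}$ are supersets of the dense sets $A$ and $B$ and hence are themselves dense; because $A \cap B = \emptyset$, a routine distributive calculation gives $(A \cup \{z\}) \cap (B \cup \{z\}) = \{z\}$. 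Reapplying $\mathbf{F}_d^T(\mathbf{X})$, the non-empty singleton $\{z\}$ must be dense. But $\{z\}$ being $\mu$-dense means that every non-empty $U \in \mu$ contains $z$; letting $z$ range over $X$ shows that every non-empty element of $\mu$ must equal $X$. Combined with the fact that $\mu$ has at least one non-empty member, this gives $\mu = \{\emptyset, X\}$, contradicting the hypothesis $\mu \neq \{\emptyset, X\}$.

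For the ``Furthermore'' clause I would appeal to the equivalence just proved. Non-indiscreteness provides some $U \in \mu$ with $\emptyset \neq U \neq X$, which simultaneously yields $\mu \neq \{\emptyset\}$ and $\mu \neq \{\emptyset, X\}$; resolvability provides $D \in \mathcal{D}(\mathbf{X})$ with $X \setminus D \in \mathcal{D}(\mathbf{X})$, and their intersection is $\emptyset$, which fails to be $\mu$-dense precisely because $\mu \neq \{\emptyset\}$. Thus $\mathbf{F}_d(\mathbf{X})$ fails, and the equivalence transports this to the failure of $\mathbf{F}_d^T(\mathbf{X})$. The only genuinely non-routine ingredient in the whole argument is the enlargement step that converts a hypothetical disjoint pair of dense sets into the dense singleton $\{z\}$ for an arbitrary $z \in X$; once that step is spotted, the remainder is bookkeeping.
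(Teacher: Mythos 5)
Your proof is correct and uses essentially the same key idea as the paper: enlarging a hypothetical disjoint pair of dense sets by a single point to produce two dense sets with a non-empty, non-dense singleton intersection. The only cosmetic difference is that the paper fixes one witness $x_0$ with $\{x_0\}$ not dense (available since $\mu\neq\{\emptyset\}$ and $\mu\neq\{\emptyset,X\}$) and derives the failure of $\mathbf{F}_d^T(\mathbf{X})$ directly, whereas you let $z$ range over all of $X$ and contradict $\mu\neq\{\emptyset,X\}$; these are contrapositive reorganizations of the same argument.
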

\begin{proof}
If $\mu=\{\emptyset\}$, then both $\mathbf{F}_d^T(\mathbf{X})$ and $\mathbf{F}_d(\mathbf{X})$ are true, so equivalent. 

Assume that $\mu\neq\{\emptyset\}$. Since $\mu\neq\{\emptyset, X\}$, we can choose a point $x_0\in X$ such that the set $\{x_0\}$ is not dense in $\mathbf{X}$. Suppose that $A, B$ is a pair of disjoint dense sets in $\mathbf{X}$. Let $C=B\cup\{x_0\}$ and $D=B\cup\{x_0\}$. The sets $C$ and $D$ are both dense in $\mathbf{X}$ but $C\cap D=\{x_0\}$ is not dense in $\mathbf{X}$. This shows that if $\mathbf{F}_d^T(\mathbf{X})$ holds, so does $\mathbf{F}_d(\mathbf{X})$; moreover, if $\mathbf{X}$ is resolvable, then $\mathbf{F}_d^T(\mathbf{X})$ is false. Proposition \ref{s5p3}(ii) completes the proof.
\end{proof}

\begin{corollary}
\label{s5c5}
For every generalized topological $T_0$-space $\mathbf{X}$, the statements $\mathbf{F}_d(\mathbf{X})$ and $\mathbf{F}_d(\mathbf{Y})$ are equivalent.
\end{corollary}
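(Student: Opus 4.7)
The plan is to reduce the corollary directly to Proposition \ref{s5p4}, which already delivers the equivalence of $\mathbf{F}_d(\mathbf{X})$ and $\mathbf{F}_d^T(\mathbf{X})$ under the assumption $\mu\neq\{\emptyset,X\}$. Thus, the only case left to address is $\mu=\{\emptyset,X\}$, and here one exploits the $T_0$ hypothesis to show that this case is trivial.

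First, assume $\mathbf{X}=\langle X,\mu\rangle$ is a $T_0$-space with $\mu=\{\emptyset,X\}$. I would argue that $|X|\leq 1$: if $x,y\in X$ were distinct points, the $T_0$ axiom would require an open set $U\in\mu$ with $|U\cap\{x,y\}|=1$, but the only non-empty member of $\mu$ is $X$ itself, which contains both $x$ and $y$. This contradiction forces $X$ to be empty or a singleton.

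Next, in either degenerate case I would verify by direct inspection that $\mathcal{D}(\mathbf{X})$ is either $\{\emptyset\}$ (when $X=\emptyset$) or $\{X\}$ (when $X$ is a singleton); in both subcases the family $\mathcal{D}(\mathbf{X})$ is closed under finite intersections, so $\mathbf{F}_d(\mathbf{X})$ holds, and $\mu[\mathcal{D}(\mathbf{X})]=\mathcal{D}(\mathbf{X})\cup\{\emptyset\}$ is a topology on $X$, so $\mathbf{F}_d^T(\mathbf{X})$ holds as well. Combining this with the case $\mu\neq\{\emptyset,X\}$ handled by Proposition \ref{s5p4} yields the desired equivalence for every generalized topological $T_0$-space $\mathbf{X}$.

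There is no real obstacle: the corollary is essentially a book-keeping extension of Proposition \ref{s5p4}. The only subtlety, if any, is recognizing that the $T_0$ condition is exactly strong enough to collapse the excluded case of Proposition \ref{s5p4} into the pair of trivial instances $X=\emptyset$ and $|X|=1$, both of which make the two formulae hold simultaneously and hence be equivalent.
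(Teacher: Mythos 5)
Your proof is correct and follows essentially the same route as the paper: split on whether $\mu=\{\emptyset,X\}$, use the $T_0$ axiom to reduce that case to $|X|\leq 1$ where both formulae hold trivially, and invoke Proposition \ref{s5p4} otherwise. (You also correctly read the corollary's ``$\mathbf{F}_d(\mathbf{Y})$'' as a typo for $\mathbf{F}_d^T(\mathbf{X})$, which is what the paper's own proof establishes.)
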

\begin{proof}
Let $\mathbf{X}=\langle X, \mu\rangle$ be a generalized topological $T_0$-space. If $\mu=\{\emptyset, X\}$, then $X$ is either empty or a singleton, so $\mathbf{F}_d^T(\mathbf{X})$ and $\mathbf{F}_d(\mathbf{X})$ are both true. If $\mu\neq\{\emptyset, X\}$, it suffices to apply Proposition \ref{s5p4}.
\end{proof}

\begin{corollary}
\label{s5c6}
If $\mathbf{X}=\langle X, \mu\rangle$ is a generalized topological space such that $\mathbf{F}_d^T(\mathbf{X})$ is true and $\mathbf{F}_d(\mathbf{X})$ is false, then the set $X$ consists of at least two points and $\mu=\{\emptyset, X\}$.
\end{corollary}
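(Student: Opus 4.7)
The plan is to derive the conclusion directly from the contrapositive of Proposition \ref{s5p4}, together with a short inspection of the two degenerate cases $X=\emptyset$ and $|X|=1$, which Proposition \ref{s5p4} leaves untouched.

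Under the hypothesis, $\mathbf{F}_d^T(\mathbf{X})$ is true while $\mathbf{F}_d(\mathbf{X})$ is false, so in particular these two statements are not equivalent for $\mathbf{X}$. Proposition \ref{s5p4} asserts that whenever $\mu\neq\{\emptyset,X\}$ the statements $\mathbf{F}_d^T(\mathbf{X})$ and $\mathbf{F}_d(\mathbf{X})$ are equivalent; thus the failure of the equivalence forces $\mu=\{\emptyset,X\}$, which gives the claimed form of $\mu$.

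It then remains to rule out $X=\emptyset$ and $|X|=1$. If $X=\emptyset$, then $\mu=\{\emptyset\}$ and the density condition of Definition \ref{s2d1}(5) is vacuous, so $\mathcal{D}(\mathbf{X})=\{\emptyset\}$; the intersection of any two members of $\mathcal{D}(\mathbf{X})$ is again $\emptyset$, hence an element of $\mathcal{D}(\mathbf{X})$, and $\mathbf{F}_d(\mathbf{X})$ holds. If $X=\{x_0\}$ and $\mu=\{\emptyset,X\}$, then the only non-empty $\mu$-open set is $X$, so $\mathcal{D}(\mathbf{X})=\{X\}$, and again $\mathbf{F}_d(\mathbf{X})$ holds trivially. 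In either situation we contradict the assumption that $\mathbf{F}_d(\mathbf{X})$ is false, hence $X$ must contain at least two points.

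No serious obstacle is expected: the corollary is essentially a contrapositive restatement of Proposition \ref{s5p4}, the only minor subtlety being the vacuous case $X=\emptyset$, where one must remember that every subset of $X$ is dense precisely because there are no non-empty open sets against which to test density.
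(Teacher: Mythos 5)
Your proof is correct and follows the same route the paper intends: the corollary is the contrapositive of Proposition \ref{s5p4} combined with the observation (also used in the proof of Corollary \ref{s5c5}) that when $\mu=\{\emptyset,X\}$ and $X$ is empty or a singleton both $\mathbf{F}_d^T(\mathbf{X})$ and $\mathbf{F}_d(\mathbf{X})$ hold. Your explicit verification of the two degenerate cases, including the vacuous density condition when $X=\emptyset$, is exactly the right check.
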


It follows from Proposition \ref{s5p4} that, for the space $\mathbf{X}$ from Example \ref{s4e6}, $\mathbf{F}_d(\mathbf{X})$ and $\mathbf{F}_d^T(\mathbf{X})$ are both false.

\begin{proposition}
\label{s5p7}
Let $\mathbf{X}$ be a locally compact, dense subspace of a Hausdorff topological space $\mathbf{Y}$. Then the following conditions are satisfied:
\begin{enumerate}
\item[(i)] $\mathbf{F}_d(\mathbf{X})$, $\mathbf{F}_d(\mathbf{Y})$, $\mathbf{F}_d^T(\mathbf{X})$ and $\mathbf{F}_d^T(\mathbf{Y})$ are all equivalent;
\item[(ii)] $\mathbf{X}$ is resolvable if and only if $\mathbf{Y}$ is resolvable.
\end{enumerate}
\end{proposition}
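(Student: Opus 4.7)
The plan rests on the classical fact that a locally compact dense subspace of a Hausdorff space is open. First I would establish this: by local compactness together with the Hausdorff property of $\mathbf{Y}$, the set $X$ is locally closed in $\mathbf{Y}$, so $X=U\cap C$ for some $U$ open and $C$ closed in $\mathbf{Y}$; since $X$ is dense in $\mathbf{Y}$, we have $C\supseteq\cl_{\mathbf{Y}}(X)=Y$, hence $C=Y$ and $X=U$ is open in $\mathbf{Y}$.

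Next I would prove the key dictionary: because $X$ is open and dense in $\mathbf{Y}$, every non-empty set open in $\mathbf{X}$ is a non-empty set open in $\mathbf{Y}$, and every non-empty set $V$ open in $\mathbf{Y}$ meets $X$ in a non-empty set open in $\mathbf{X}$. From this one checks directly that, for $D\subseteq X$, we have $D\in\mathcal{D}(\mathbf{X})$ if and only if $D\in\mathcal{D}(\mathbf{Y})$, and for $E\subseteq Y$, $E\in\mathcal{D}(\mathbf{Y})$ if and only if $E\cap X\in\mathcal{D}(\mathbf{X})$.

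With this dictionary in hand, (i) follows rapidly. If $\mathbf{F}_d(\mathbf{Y})$ holds and $A,B\in\mathcal{D}(\mathbf{X})$, then $A,B\in\mathcal{D}(\mathbf{Y})$, so $A\cap B\in\mathcal{D}(\mathbf{Y})$, whence $A\cap B\in\mathcal{D}(\mathbf{X})$ since $A\cap B\subseteq X$. Conversely, if $\mathbf{F}_d(\mathbf{X})$ holds and $E,F\in\mathcal{D}(\mathbf{Y})$, then $E\cap X,F\cap X\in\mathcal{D}(\mathbf{X})$, so $(E\cap F)\cap X\in\mathcal{D}(\mathbf{X})$, which gives $E\cap F\in\mathcal{D}(\mathbf{Y})$. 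To bring in the two $\mathbf{F}_d^T$ statements, I would invoke Proposition \ref{s5p4}. The degenerate cases $|Y|\leq 1$ are checked by direct inspection; if $|Y|\geq 2$, the Hausdorff property of $\mathbf{Y}$ supplies disjoint non-empty proper open sets, so the topology of $\mathbf{Y}$ is not $\{\emptyset,Y\}$, and since a dense singleton in a Hausdorff space would have to equal $Y$, the subspace $\mathbf{X}$ is Hausdorff with at least two points and so its topology is also not $\{\emptyset,X\}$. Proposition \ref{s5p4} then yields the two missing equivalences.

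For (ii) I would use the dictionary once more: a disjoint pair $A,B\in\mathcal{D}(\mathbf{X})$ lifts unchanged to a disjoint pair in $\mathcal{D}(\mathbf{Y})$, and conversely a disjoint pair $A,B\in\mathcal{D}(\mathbf{Y})$ restricts to the disjoint pair $A\cap X,B\cap X\in\mathcal{D}(\mathbf{X})$; the cases $|Y|\leq 1$ are trivial. The main difficulty is not deep — it is simply a matter of invoking openness of $X$ in $\mathbf{Y}$ correctly and of handling the Hausdorff-versus-indiscrete edge cases so that Proposition \ref{s5p4} is legitimately applied.
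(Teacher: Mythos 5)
Your proof is correct and follows essentially the same route as the paper: both arguments hinge on the fact that a locally compact dense subspace of a Hausdorff space is open (which you prove and the paper cites from Engelking), then use the resulting correspondence between dense sets of $\mathbf{X}$ and of $\mathbf{Y}$ to transfer $\mathbf{F}_d$ and resolvability, and finally reduce the $\mathbf{F}_d^T$ statements to the $\mathbf{F}_d$ statements via Proposition \ref{s5p4} (the paper invokes Corollary \ref{s5c5}, which is just that proposition packaged for $T_0$-spaces, so your explicit Hausdorff edge-case handling plays the same role).
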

\begin{proof}
Let $A,B\in\mathcal{D}(\mathbf{Y})$ and $C,D\in\mathcal{D}(\mathbf{X)}$. Then both $A\cap X$ and $B\cap X$ are dense in $\mathbf{X}$ because $X$ is open in $\mathbf{Y}$ (see \cite[Theorem 3.3.9]{En}). Furthermore, since $X$ is dense in $\mathbf{Y}$, the sets $C$ and $D$ are both dense in $\mathbf{Y}$. If $C\cap D=\emptyset$, then $\mathbf{Y}$ is resolvable. If $A\cap B=\emptyset$, then $\mathbf{X}$ is resolvable. Hence (ii) holds. 

Let us prove that (i) is satisfied. To this aim, by Corollary \ref{s5c5}, it suffices to show that $\mathbf{F}_d(\mathbf{X})$ and $\mathbf{F}_d(\mathbf{Y})$ are equivalent. We observe that if $\mathbf{F}_d(\mathbf{X})$ is true, then $A\cap B\cap X$ is dense in $\mathbf{X}$, so $A\cap B$ is dense in $\mathbf{Y}$. Therefore, $\mathbf{F}_d(\mathbf{X})$ implies $\mathbf{F}_d(\mathbf{Y})$.

Now, suppose that $\mathbf{F}_d(\mathbf{Y})$ is true. Then the set $C\cap D$ is dense in $\mathbf{Y}$. Therefore, since $C\cap D\subseteq X$, the set $C\cap D$ is dense in $\mathbf{X}$. Hence $\mathbf{F}_d(\mathbf{Y})$  implies $\mathbf{F}_d(\mathbf{X})$. This completes the proof.
\end{proof}

\begin{corollary}
\label{s5c8} For every Hausdorff compactification $\alpha\mathbb{R}$ of the real line $\mathbb{R}$ equipped with the natural topology, it holds that $\alpha\mathbb{R}$ is resolvable and $\mathbf{F}_d(\alpha\mathbb{R})$ is false. 
\end{corollary}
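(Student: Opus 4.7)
The plan is to apply Proposition \ref{s5p7} with $\mathbf{X}=\langle\mathbb{R},\tau_{nat}\rangle$ and $\mathbf{Y}=\alpha\mathbb{R}$. All the hypotheses are available in $\mathbf{ZF}$: the real line with its natural topology is Hausdorff and locally compact, by definition of a compactification $\mathbb{R}$ is a dense subspace of $\alpha\mathbb{R}$, and $\alpha\mathbb{R}$ is Hausdorff by assumption.

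First, I would check that $\mathbb{R}$ itself is resolvable and that $\mathbf{F}_d(\mathbb{R})$ fails. For this, note that the sets $\mathbb{Q}$ and $\mathbb{R}\setminus\mathbb{Q}$ are disjoint and both dense in $\mathbb{R}$ (a standard $\mathbf{ZF}$ fact, since $\mathbb{Q}$ is explicitly enumerable and every non-empty open interval meets both $\mathbb{Q}$ and its complement). Hence $\mathbb{R}$ is resolvable, and in particular $\mathbb{Q}\cap(\mathbb{R}\setminus\mathbb{Q})=\emptyset$ is not dense, so $\mathbf{F}_d(\mathbb{R})$ is false.

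Second, I would invoke Proposition \ref{s5p7}: by part (ii) the resolvability of $\mathbb{R}$ transfers to $\alpha\mathbb{R}$, giving the first assertion of the corollary; and by part (i) the statement $\mathbf{F}_d(\alpha\mathbb{R})$ is equivalent to $\mathbf{F}_d(\mathbb{R})$, which was just shown to be false, giving the second assertion.

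There is essentially no obstacle, since the corollary is a direct specialization of Proposition \ref{s5p7}. The only points worth articulating are the in-$\mathbf{ZF}$ verification of density of $\mathbb{Q}$ and of local compactness of $\mathbb{R}$ (both elementary and choice-free), and the fact that one could alternatively reach the same conclusion by combining Proposition \ref{s5p4} applied to the non-indiscrete resolvable space $\alpha\mathbb{R}$ (yielding $\mathbf{F}_d^T(\alpha\mathbb{R})$ false) with Proposition \ref{s5p3}(ii) (to deduce that $\mathbf{F}_d(\alpha\mathbb{R})$ is false).
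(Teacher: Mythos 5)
Your proposal is correct and follows exactly the route the paper intends: Corollary \ref{s5c8} is a direct specialization of Proposition \ref{s5p7} to $\mathbf{X}=\mathbb{R}$ and $\mathbf{Y}=\alpha\mathbb{R}$, using the disjoint dense sets $\mathbb{Q}$ and $\mathbb{R}\setminus\mathbb{Q}$ to witness that $\mathbb{R}$ is resolvable and that $\mathbf{F}_d(\mathbb{R})$ fails. The verification of the hypotheses in $\mathbf{ZF}$ and the alternative derivation via Propositions \ref{s5p4} and \ref{s5p3}(ii) are both sound.
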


In the light of Proposition \ref{s5p4} and Corollary \ref{s5c6}, a satisfactory answer to the following question will be also a satisfactory answer to Question \ref{s1q7}.

\begin{question}
\label{s5q9}
Under which conditions on an irresolvable generalized topological space $\mathbf{X}$ is $\mathbf{F}_d(\mathbf{X})$ true?
\end{question}

Of course, for every iso-dense space $\mathbf{X}$, $\mathbf{F}_d(\mathbf{X})$ is true; however, to be an iso-dense space is not a necessary condition for an irresolvable generalized topological space $\mathbf{X}$ to satisfy $\mathbf{F}_d(\mathbf{X})$.  To investigate which non-trivial irresolvable generalized topological spaces $\mathbf{X}$ satisfy $\mathbf{F}_d(\mathbf{X})$ in $\mathbf{ZF}$ is a good topic for extensive future research.  Let us finish this article with the following theorems concerning cofinite topologies on infinite sets. We recall that, for any set $X$, $\tau_{cof}(X)$ denotes the cofinite topology on $X$ (see Definition \ref{s2d14}).

\begin{theorem}
\label{s5t10}
For every infinite set $X$, the following conditions are equivalent:
\begin{enumerate}
\item[(i)] $\mathbf{F}_d(\tau_{cof}(X))$ is true;
\item[(ii)] the cofinite topology $\tau_{cof}(X)$ on $X$ is irresolvable;
\item[(iii)] $X$ is an amorphous set.
\end{enumerate}
\end{theorem}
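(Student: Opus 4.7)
The plan is to first reduce everything to a simple characterisation of $\tau_{cof}(X)$-dense sets, and then close a cycle $\mathrm{(iii)}\Rightarrow\mathrm{(i)}\Rightarrow\mathrm{(ii)}\Rightarrow\mathrm{(iii)}$.

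\emph{Preliminary step.} I would first observe that, for an infinite set $X$, a subset $D\subseteq X$ belongs to $\mathcal{D}(\tau_{cof}(X))$ if and only if $D$ is infinite. Indeed, the non-empty members of $\tau_{cof}(X)$ are precisely the complements $X\setminus F$ with $F\in[X]^{<\omega}$; if $D$ is finite then $X\setminus D$ is a non-empty open set disjoint from $D$, while if $D$ is infinite then $D\setminus F$ is non-empty for every $F\in[X]^{<\omega}$. This key lemma will be used throughout, and it requires no form of choice, so it is perfectly safe in $\mathbf{ZF}$.

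\emph{$\mathrm{(iii)}\Rightarrow\mathrm{(i)}$.} Assume that $X$ is amorphous, and let $A,B\in\mathcal{D}(\tau_{cof}(X))$. By the preliminary step both $A$ and $B$ are infinite, so amorphousness forces $X\setminus A$ and $X\setminus B$ to be finite. Then
\[
X\setminus(A\cap B)=(X\setminus A)\cup(X\setminus B)
\]
is finite, and since $X$ is infinite, $A\cap B$ is infinite, hence dense. Thus $\mathbf{F}_d(\tau_{cof}(X))$ holds.

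\emph{$\mathrm{(i)}\Rightarrow\mathrm{(ii)}$.} Since $X$ is infinite, the non-empty open set $X$ witnesses that $\emptyset\notin\mathcal{D}(\tau_{cof}(X))$. If $A,B$ were disjoint dense sets, then $A\cap B=\emptyset$ would not be dense, contradicting $\mathbf{F}_d(\tau_{cof}(X))$. So no disjoint dense pair exists, i.e.\ $\tau_{cof}(X)$ is irresolvable.

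\emph{$\mathrm{(ii)}\Rightarrow\mathrm{(iii)}$.} I would prove the contrapositive. If $X$ is not amorphous, then, by Definition~\ref{s2d15}, there is an infinite $Z\subseteq X$ with $X\setminus Z$ also infinite. By the preliminary step, both $Z$ and $X\setminus Z$ lie in $\mathcal{D}(\tau_{cof}(X))$, and they are disjoint, so $\tau_{cof}(X)$ is resolvable.

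\emph{Expected difficulty.} None of the three implications is deep; the whole argument is essentially the preliminary characterisation of dense sets together with an unpacking of the definition of an amorphous set. The only point that deserves care is the avoidance of any choice-like step in the preliminary lemma and in $\mathrm{(ii)}\Rightarrow\mathrm{(iii)}$, because the whole interest of the theorem is its role in the $\mathbf{ZF}$ independence results of Corollary~\ref{s5c12}; but inspection shows that each step uses only finite combinatorics on $X$ and the definitions, so the argument is carried out entirely in $\mathbf{ZF}$.
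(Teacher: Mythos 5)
Your proposal is correct and follows essentially the same route as the paper: the same cycle (iii)$\Rightarrow$(i)$\Rightarrow$(ii)$\Rightarrow$(iii), the same observation that the $\tau_{cof}(X)$-dense sets are exactly the infinite subsets, and the same finite-union computation for (iii)$\Rightarrow$(i). The only cosmetic difference is that you prove (i)$\Rightarrow$(ii) directly (disjoint dense sets would have empty, hence non-dense, intersection), whereas the paper deduces it from Proposition~\ref{s5p4}; both are valid and choice-free.
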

\begin{proof} We fix an infinite set $X$. That (i) implies (ii) follows from Proposition \ref{s5p4}. If $X$ is not amorphous, there exists an infinite subset $D$ of $X$ such that $X\setminus D$ is also infinite. Then both the sets $D$ and $X\setminus D$ are $\tau_{cof}(X)$-dense.  Hence (ii) implies (iii).  

Assuming that (iii) holds, we consider any pair $A, B$ of $\tau_{cof}(X)$-dense sets. Then both $A$ and $B$ are infinite subsets of $X$. Since $X$ is amorphous, the sets $X\setminus A$ and $X\setminus B$ are both finite. This implies that the set $X\setminus (A\cap B)=(X\setminus A)\cup (X\setminus B)$ is finite. Since $X$ is infinite, we infer that $A\cap B$ is infinite. Therefore, $A\cap B$ is $\tau_{cof}(X)$-dense. In consequence,(iii) implies (i).
\end{proof}

The following theorem is an immediate consequence of Theorem \ref{s5t10} and Proposition \ref{s5p4}.

\begin{theorem}
\label{s5t11}
It holds in $\mathbf{ZF}$ that the following statements are all equivalent:
\begin{enumerate}
\item[(i)] there are no amorphous sets (\cite[Form 64]{HR});
\item[(ii)] for every infinite set $X$, $\tau_{cof}(X)$ is resolvable;
\item[(iii)] for every infinite set $X$, $\mathbf{F}_d(\tau_{cof}(X))$ is false;
\item[(iv)]  for every infinite set $X$, $\mathbf{F}_d^T(\tau_{cof}(X))$ is false.
\end{enumerate}
\end{theorem}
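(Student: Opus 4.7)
The plan is to deduce Theorem \ref{s5t11} directly by universally quantifying the pointwise equivalences of Theorem \ref{s5t10} over all infinite sets, and then bridging (iii) and (iv) via Proposition \ref{s5p4}. First, I would unpack Form 64 of \cite{HR} as the assertion that every infinite set $X$ fails to be amorphous. By Theorem \ref{s5t10}, for any individual infinite set $X$, the statements ``$X$ is amorphous'', ``$\tau_{cof}(X)$ is irresolvable'', and ``$\mathbf{F}_d(\tau_{cof}(X))$ is true'' are equivalent. Negating each of these and universally quantifying over infinite $X$ immediately yields the equivalence of (i), (ii), and (iii).

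For the equivalence of (iii) and (iv), I would invoke Proposition \ref{s5p4} applied to $\langle X, \tau_{cof}(X)\rangle$ for each infinite $X$. The hypothesis ``$\mu\neq\{\emptyset, X\}$'' required by that proposition is trivial to verify: for infinite $X$, any singleton $\{x\}\subseteq X$ is finite, so $X\setminus\{x\}$ is a proper nonempty cofinite open set, showing that $\tau_{cof}(X)$ strictly contains $\{\emptyset, X\}$. Proposition \ref{s5p4} then gives, for each infinite $X$, that $\mathbf{F}_d^{T}(\tau_{cof}(X))$ and $\mathbf{F}_d(\tau_{cof}(X))$ are equivalent; negating and universally quantifying finishes the proof.

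The only substantive matter is the quantifier bookkeeping when lifting the pointwise equivalences of Theorem \ref{s5t10} and Proposition \ref{s5p4} to the global statements of Theorem \ref{s5t11}, which is routine since $\forall X\,(\varphi(X)\leftrightarrow\psi(X))$ entails $(\forall X\,\varphi(X))\leftrightarrow(\forall X\,\psi(X))$. No new choice principle enters beyond whatever is implicit in the cited results, so the argument is genuinely a straightforward corollary rather than an independent theorem; the ``hard work'' has already been done in the proofs of Theorem \ref{s5t10} and Proposition \ref{s5p4}.
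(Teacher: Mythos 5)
Your proof is correct and follows exactly the paper's route: the paper gives no written proof beyond declaring the theorem ``an immediate consequence of Theorem \ref{s5t10} and Proposition \ref{s5p4}'', which is precisely the combination you carry out (pointwise equivalences from Theorem \ref{s5t10} negated and universally quantified for (i)--(iii), and Proposition \ref{s5p4} applied to $\tau_{cof}(X)\neq\{\emptyset,X\}$ for (iii)$\leftrightarrow$(iv)). Your explicit verification that $\tau_{cof}(X)$ properly contains $\{\emptyset,X\}$ for infinite $X$ is a detail the paper leaves tacit but is exactly what is needed.
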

 
Since Form 64 of \cite{HR} is known to be independent of $\mathbf{ZF}$ (see Remark \ref{s2r16}), we deduce from Theorem \ref{s5t11} the following independence results: 
 
\begin{corollary}
\label{s5c12}
The statements (i)--(iv) of Theorem \ref{s5t11} are all independent of $\mathbf{ZF}$.
\end{corollary}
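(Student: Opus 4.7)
The plan is short because Theorem \ref{s5t11} has already done the real work: statements (i)--(iv) are provably equivalent in $\mathbf{ZF}$, so they must have identical truth values in every model of $\mathbf{ZF}$. Hence it suffices to exhibit one model of $\mathbf{ZF}$ in which (i) holds and one in which (i) fails.

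First, I would point out that any model of $\mathbf{ZFC}$ is a model of $\mathbf{ZF}$ in which (i) is true, because the Axiom of Choice implies that every infinite set contains a countably infinite subset, which can be split into two disjoint infinite pieces; this rules out amorphous sets and gives Form 64. Next, I would invoke Remark \ref{s2r16}: Monro's Model III (model $\mathcal{M}37$ of \cite{HR}) is a model of $\mathbf{ZF}$ in which Form 64 fails, i.e., amorphous sets exist. Thus (i) is neither provable nor refutable in $\mathbf{ZF}$.

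Finally, combining this with the equivalence of (i)--(iv) established in Theorem \ref{s5t11}, each of (ii), (iii), and (iv) is true in every model of $\mathbf{ZFC}$ and false in Monro's Model III, and so is independent of $\mathbf{ZF}$ as well. There is no real obstacle here: the whole content of the corollary is the translation of the known independence of Form 64 through the $\mathbf{ZF}$-equivalences proved in Theorem \ref{s5t11}.
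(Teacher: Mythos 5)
Your proposal is correct and follows exactly the paper's route: the authors likewise deduce the corollary by combining the known independence of Form 64 (Remark \ref{s2r16}) with the $\mathbf{ZF}$-equivalences of Theorem \ref{s5t11}. You merely spell out the two witnessing models ($\mathbf{ZFC}$ models and Monro's Model III) a bit more explicitly than the paper does.
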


\section{Open problems}
\label{s6}

We do not have satisfactory solutions to the following open problems.
\begin{enumerate}
\item Find necessary and sufficient conditions in $\mathbf{ZF}$ for an irresolvable (generalized) topological space $\mathbf{X}$ to satisfy $\mathbf{F}_d(\mathbf{X})$. (See Question \ref{s1q7} and \ref{s5q9}.)
\item Find necessary and sufficient conditions in $\mathbf{ZF}$ for a generalized topological space $\mathbf{X}$ to be such that $\mathcal{DO}(\mathbf{X})\cup\{\emptyset\}$ is a topology on $X$. (See Example \ref{s2e5} and Proposition \ref{s2p6}.)

\item Verify which known $\mathbf{ZFC}$-theorems on resolvable spaces are independent of $\mathbf{ZF}$.

\end{enumerate}

\end{document}